\newtheorem{theorem}{Theorem}[]
\newtheorem{lemma}[theorem]{Lemma}
\theoremstyle{remark}
\newtheorem*{remark}{Remark}
\titleformat{\section}[display]{\normalfont\huge\bfseries\centering}{\centering\chaptertitlename\thechapter}{10pt}{\Large}
\titlespacing*{\section}{0pt}{0ex}{0ex}
\begin{document}
\title{Focusing solutions of Vlasov-Poisson equation on a spherical shell} 
\author[Initial Surname]{Kaiwen Tan}
\date{\today}
\email{kevin.tan0606@outlook.com}
\maketitle

\let\thefootnote\relax
\footnotetext{MSC2020: Primary 00A05, Secondary 00A66.} 

\begin{abstract}
We study smooth, spherically-symmetric solutions to the Vlasov-Poisson system and relativistic Vlasov-Poisson system in the plasma physical case. In particular, we construct solutions that initially possess arbitrarily small charge densities and electric fields, but attain arbitrarily large value of these quantities and concentrate on any given spherical shell at some later time. 
\end{abstract} 

\bigskip
\section*{1. Introduction}
We consider the single-species Vlasov-Poisson (VP) system:
\begin{equation} \label{VP-system}
    \begin{aligned}
        &\partial_t f + v \cdot \triangledown f + E \cdot \triangledown_v f = 0, \\ 
        &\rho(t, x) = \int_{\mathbb{R}^3} f(t, x, v) dv,\\
        &E(t, x) = \int_{\mathbb{R}^3} \frac{x-y}{| x-y |^3}\rho (t, y) dy.
    \end{aligned}
\end{equation}
Here $t>0$ represents time, $x \in \mathbb{R}^3$ is the particle position, $v \in \mathbb{R}^3$ is the particle momentum. Also, $f(t, x, v) >0$ is the density distribution of the particles, $\rho(t, x)$ is the associated charge density, and $E(t, x)$ is the electric field generated by the charged particles. \\
\\
Also, by taking relativistic effects into account, we consider the relativistic Vlasov-Poisson (RVP) system:
\begin{equation}
    \begin{aligned}
        &\partial_t f + \hat{v} \cdot \triangledown f + E \cdot \triangledown_v f = 0, \\ 
        &\rho(t, x) = \int_{\mathbb{R}^3} f(t, x, v) dv,\\
        &E(t, x) = \int_{\mathbb{R}^3} \frac{x-y}{| x-y |^3}\rho (t, y) dy,
    \end{aligned}
\end{equation}
while
\[
\hat{v} = \frac{v}{\sqrt{1+|v|^2}}
\]
 is the relativistic velocity.\\
To complete the description of the systems, we consider the particle distribution at time zero, denoted as 
\[
f_0(x, v) = f(0, x, v) \ge 0.
\]
Then an important property of both the VP and RVP system, which will be utilized throughout the paper, is the conservation of total mass \cite{The Cauchy problem in kinetic theory}, namely
\[
M(t) = \int \int_{\mathbb{R}^6} f(t, x, v) dvdx = \int \int_{\mathbb{R}^6} f_0(x, v) dvdx.
\]
\hfill\break
In this paper, we assume both systems to be spherically symmetric. It is an established result that for both systems, spherically symmetric initial data gives rise to global-in-time, spherically symmetric solutions (see \cite{Symmetric plasmas and their decay}, \cite{Propogation of moments and regularity for the three dimensional Vlasov-Poisson system}, and \cite{Global classical solution of the Vlasov-Poisson system in three dimensions for general initial data}). Moreover, \cite{Symmetric plasmas and their decay} proves that the charge density and electric field generated by spherically symmetric solutions of both systems eventually decay in the $L^\infty$ sense. Meanwhile, in the paper \cite{Arbitrarily large solutions of the Vlasov-Poisson system} and \cite{Concentrating solutions of the relativistic Vlasov- Maxwell system} by J. Ben-Artzi, S. Calogero and S. Pankavich, it is shown that for both systems, one can construct spherically symmetric solutions with arbitrarily small initial density and electric field, but over intermediate timescales, the particles will concentrate to create arbitrarily large density and field. The result is stated as follows:
\begin{theorem}{(J. Ben-Artzi, S. Calogero and S. Pankavich)}
    For any constants $C_1$ and $C_2$, there exists a smooth, spherically symmetric of the Vlasov-Poisson or the relativistic Vlasov-Poisson system such that
    \[
    \lVert \rho (0) \rVert_{\infty}, \quad  \lVert E (0) \rVert_{\infty} \le C_1
    \]
    but for some time $T>0$,
    \[
    \lVert \rho (T) \rVert_{\infty}, \quad  \lVert E (T) \rVert_{\infty} \ge C_2.
    \]
\end{theorem}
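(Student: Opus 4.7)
The plan is to exploit spherical symmetry together with a free-streaming approximation, following the strategy of the Ben-Artzi-Calogero-Pankavich papers cited above. Under spherical symmetry, a particle's state can be parameterized by $r = |x|$, radial velocity $w$, and the conserved angular momentum modulus $L = |x \times v|$; the radial characteristic then satisfies an effective one-dimensional ODE with centrifugal term $L^2/r^3$ and forcing $E_r(t, r) = M(t, r)/r^2$, where $M(t, r) = 4\pi \int_0^r s^2 \rho(t, s)\, ds$ is the enclosed mass (Gauss's law). In the absence of any field, a particle with angular momentum $L$ and speed $|v_0|$ attains minimum radius $r_{\min} = L/|v_0|$ --- a formula valid in both VP and RVP, differing only in the time to focus. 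Thus if many particles share a common $(w_0, L_0)$ and start on the sphere $\{|x| = R_0\}$, they all reach the shell $\{|x| = R_T\}$ simultaneously, producing concentration via geometric compression of the spherical area from $R_0^2$ down to $R_T^2$.

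I construct initial data as follows. First, fix $(R_0, w_0, L_0)$ so that the free-streaming focal radius equals a prescribed $R_T \ll R_0$, reached at a prescribed time $T > 0$. Take $f_0$ smooth, nonnegative, and spherically symmetric, supported in a thin phase-space tube of widths $(\delta_r, \delta_w, \delta_L)$ about this ideal trajectory, with $\|f_0\|_\infty$ tuned so that $\|\rho(0)\|_\infty \leq C_1$; since the total mass is then bounded by $M \lesssim C_1 R_0^2 \delta_r$, Gauss's law gives $|E(0, x)| \leq M/|x|^2 \lesssim C_1 \delta_r$, so $\|E(0)\|_\infty \leq C_1$ is automatic. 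Heuristically, at the focusing time $T$ the compressed shell has thickness comparable to $\delta_r$, so $\rho(T) \sim (R_0/R_T)^2 C_1$ and $\|E(T)\|_\infty \sim (R_0/R_T)^2 C_1 \delta_r$; taking $R_0/R_T$ sufficiently large (with $\delta_r$ of order one) forces both to exceed $C_2$.

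The remaining task, which is also the main obstacle, is a continuity/bootstrap argument showing that the actual characteristic flow stays close to the free-streaming one on $[0, T]$. Using the a priori bound $|E(t,x)| \leq M/|x|^2$ together with Gronwall on the radial characteristic ODE, and the interpolation $\|E(t)\|_\infty \lesssim \|\rho(t)\|_1^{1/3}\|\rho(t)\|_\infty^{2/3}$, one shows that so long as $\|\rho(t)\|_\infty$ is controlled the characteristic drift is small compared to $R_T$. The delicate point is that $\|\rho(t)\|_\infty$ will in fact grow to order $C_2$ near $T$; one must verify that the perturbative regime extends to within a small buffer of $T$ so that the geometric focusing picture survives up to the concentration moment. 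The relativistic case follows by the same scheme, since conservation of $L$ and Gauss's law are unchanged; only the time to reach the focus must be recomputed using $\hat v$.
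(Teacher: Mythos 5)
Your geometric picture is the right one and matches the cited construction in spirit: spherically symmetric data on a thin phase-space tube near a large radius $R_0$, inward velocity, reaching a small shell $R_T$, with $\|\rho(0)\|_\infty$ small because the particles are spread over a huge sphere and $\|\rho(T)\|_\infty$ large by compression; Gauss's law $|E| \le M/r^2$ handles the field. But the technical core you propose --- a Gronwall/bootstrap argument that the actual flow stays close to free streaming on $[0,T]$ --- is not how the argument is closed, and as you yourself flag, it is the weak point: $\|\rho(t)\|_\infty$ genuinely grows to order $C_2$ near $T$, so any perturbative bound keyed to $\|\rho\|_\infty$ deteriorates exactly when you need it, and a Gronwall estimate gives exponential-in-time drift that would not obviously confine the particles to the target shell.

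The idea you are missing is that the problem never needs a perturbative regime, because in the plasma case the field is \emph{repulsive}: $m(t,r)\ge 0$, so the radial forcing satisfies $\ddot{\mathscr R} = l/\mathscr R^3 + m(t,\mathscr R)/\mathscr R^2 \ge l/\mathscr R^3 \ge 0$. This sign structure yields two exact, unconditional one-sided bounds on $\mathscr R(T)$ that replace any bootstrap. For the lower bound, drop the repulsion entirely and integrate $\ddot{\mathscr R}\ge l r^{-3}$ (valid on the inward phase since $\mathscr R \le r$ there) to get $\mathscr R(T) \ge \tfrac{l}{2} r^{-3}T^2 + wT + r$. For the upper bound, use the uniform field bound $m(t,\mathscr R)/\mathscr R^2 \le M/\mathscr R^2$ with $M$ the conserved total mass to obtain $\mathscr R(t)^2 \le (r+wt)^2 + (lr^{-2} + Mr^{-1})t^2$. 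Both bounds hold for all $t$ in the inward phase with \emph{no} smallness or continuity hypothesis on $\rho(t)$, and the concentration estimate is then a pigeonhole argument ($\|\rho(T)\|_\infty \ge 3M/(4\pi(b^3-a^3))$, $\|E(T)\|_\infty \ge M/b^2$) rather than a scaling heuristic. The relativistic case works identically, with the factor $\sqrt{1+w^2+lr^{-2}}$ tracked through and the additional conserved quantity $\mathscr W^2 + l\mathscr R^{-2} \le w^2 + lr^{-2}$ supplying the needed velocity control. Replacing your bootstrap step with these two monotonicity-based bounds would close your proof.
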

This result is in fact consistent with the decay estimate given by E. Horst in \cite{Symmetric plasmas and their decay}, showing that we can construct cases of focusing solutions in a short time period, even though they will eventually decay.\\
\\
However, one may notice that in both papers \cite{Arbitrarily large solutions of the Vlasov-Poisson system} and \cite{Concentrating solutions of the relativistic Vlasov- Maxwell system}, the focusing solutions are constructed in such a way that particles are concentrated closely around the origin at time $T$. In the case of the VP system, \cite{Arbitrarily large solutions of the Vlasov-Poisson system} requires $\mathscr{R}(T)$, the spatial radius of the particles at time $T$, to satisfy $\mathscr{R}(T) \le 100 \epsilon^2$, where $\epsilon > 0$ is a sufficiently small constant. Similarly in \cite{Concentrating solutions of the relativistic Vlasov- Maxwell system}, the solution of the RVP system satisfies $\mathscr{R}(T) \le 20 \epsilon$ for small $\epsilon$. This is because the methodology applied in both papers is to constraint the particles within a thin shield around the origin at time $T$. Since the supremums of both charge density and electric field are inversely related to the volume of the space filled with particles, this method allows the solutions to achieve arbitrarily large $\lVert \rho (T) \rVert_{\infty}$ and $\lVert E (T) \rVert_{\infty}$.\\
\\
Naturally, one may raise the following question: can a similar result be established by having the particles concentrate within an arbitrary given shield, but not just close to the origin? Since the shield is randomly away from the origin, the particles are more scatteringly distributed within a larger spatial radius, raising the difficulty of whether we can still achieve the concentrating effect in charge density and electric field. But ideally this can be achieve by having the particles concentrate within a thin enough shield. This paper aims at justifying this idea, namely proving that we can in fact construct focusing solutions for both the VP and RVP systems that have particles initially distributed in any given range away form the origin and concentrated within any targeting shield; and such solutions possess a similar concentrating behavior in charge density and electric field. Specifically, this paper aims at proving the following result: 
\begin{theorem}
    For any constant $C_1$, $C_2 > 0$ and $c > b > a > 0$, there exists a smooth, spherically symmetric solution of the 1) Vlasov-Poisson system or 2) relativistic Vlasov-Poisson system such that for any particle in the initial data, if we denote its spatial radius at time $t\ge 0$ by $\mathscr{R}(t)$, then
    \[
    \mathscr{R}(0) \ge c, \quad \text{and} \quad a \le \mathscr{R}(T) \le b
    \]
    for some time $T>0$.\\
    Moreover, we have the estimation that
    \[
    \lVert \rho (0) \rVert_{\infty}, \quad  \lVert E (0) \rVert_{\infty} \le C_1
    \]
    but for some time $T>0$,
    \[
    \lVert \rho (T) \rVert_{\infty}, \quad  \lVert E (T) \rVert_{\infty} \ge C_2
    \]
\end{theorem}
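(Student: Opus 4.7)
The plan is to adapt the scheme of Ben-Artzi, Calogero, and Pankavich (Theorem~1 above) to the setting where the target concentration radius is a prescribed shell $[a,b]$ away from the origin rather than a shrinking ball around it. By spherical symmetry and Gauss's law, each zero-angular-momentum characteristic of the Vlasov field satisfies the reduced radial ODE
\[
\dot r = u, \qquad \dot u = \frac{\mathcal{M}(r,t)}{r^2}
\]
(with $u$ replaced by $\hat u = u/\sqrt{1+u^2}$ in the first equation for the RVP case), where $\mathcal{M}(r,t)$ denotes the charge enclosed in the ball of radius $r$ at time $t$. Provided no characteristics cross, $\mathcal{M}$ is conserved along each characteristic, a conservation law central to this circle of problems.

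First I would fix a target time $T>0$ and a reference ``cold'' dust flow in which every particle starting at radius $r_0$ has radial inward velocity $u_0(r_0)=(r^\ast-r_0)/T$, so that under free flow every particle arrives at a single target radius $r^\ast\in(a,b)$ at time $T$. The actual initial datum $f_0^\epsilon$ is a smooth nonnegative mollification of this singular dust, supported in $\{|x|\in[c,c+L]\}$ and in an $\epsilon$-neighborhood of the prescribed radial velocity, with zero angular momentum. By tuning $L$, $\epsilon$, and the amplitude of $f_0^\epsilon$ jointly, $\|\rho(0)\|_\infty$ is bounded directly by the amplitude of $f_0^\epsilon$, while Gauss's law controls $\|E(0)\|_\infty$ by $M_{\mathrm{tot}}/(c+L)^2$; both can be made at most $C_1$.

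Next, I run a bootstrap argument on $\|E(t)\|_\infty$: assuming a priori that the field is bounded by some $K$ on $[0,T]$, a Gronwall estimate on the characteristic ODE gives deviations of each characteristic from its free-flow image of size $O(KT^2)$ in radius and $O(KT)$ in velocity. For $\epsilon$ and $KT^2$ both small compared with $\min(b-r^\ast,\, r^\ast-a)$, every characteristic lands inside $[a,b]$ at time $T$; the strict monotonicity of $u_0(r_0)$ in $r_0$, combined with conservation of $\mathcal{M}$ along non-crossing characteristics, yields a one-sided bound on the radial flow Jacobian that rules out crossings on $[0,T]$. Pushing $f_0^\epsilon$ forward under this near-free flow gives $\|\rho(T)\|_\infty$ and $\|E(T)\|_\infty$ of order $M_{\mathrm{tot}}/(\delta(r^\ast)^2)$ and $M_{\mathrm{tot}}/(r^\ast)^2$ respectively, where $\delta\sim\epsilon+KT^2$ is the effective concentration width. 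Taking $M_{\mathrm{tot}}$ of order $C_2(r^\ast)^2$ (compatible with $\|E(0)\|_\infty\le C_1$ provided $L\gtrsim r^\ast\sqrt{C_2/C_1}$) together with $\epsilon$ small makes both exceed $C_2$, and substituting back closes the bootstrap.

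The main obstacle, compared with the original construction, is that the target shell $[a,b]$ has fixed volume rather than shrinking with $\epsilon$, so the amplification of $\|\rho\|_\infty$ must come from compressing the wide initial shell $[c,c+L]$ into a thin band inside $[a,b]$, entangling the smallness of $\|\rho(0)\|_\infty$ and $\|E(0)\|_\infty$ with the largeness of $\|\rho(T)\|_\infty$ and $\|E(T)\|_\infty$ through a delicate joint choice of $L$, $\epsilon$, $T$, and amplitude. In the relativistic case the extra constraint $|\hat u_0(r_0)|<1$ forces $T\ge c+L-r^\ast$; for the outer, nearly ultra-relativistic particles the factor $(1+u^2)^{3/2}$ appearing in the equation for $\hat u$ suppresses the effective acceleration and sharpens the Gronwall bound enough to absorb the larger $T$, so the argument proceeds by the same scheme. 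Once the estimates close, the explicit production of a smooth $f_0^\epsilon$ with the claimed properties is routine.
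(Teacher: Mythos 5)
Your plan differs from the paper's in two structural ways, and one of them opens a genuine gap in the relativistic case. The paper does not run a bootstrap on $\|E(t)\|_\infty$ at all: it deduces two-sided a priori bounds on $\mathscr{R}(T)$ directly from the characteristic ODEs using only $0\le m(t,r)\le M$ (Lemmas~3 and~4), and it works with strictly positive angular momentum $l>0$ rather than a purely radial cold-dust ansatz. Your appeal to ``conservation of $\mathcal M$ along non-crossing characteristics'' is already on shaky ground here, since your mollified datum $f_0^\epsilon$ is not monokinetic --- it carries an $\epsilon$-spread of velocities at each radius, so radial trajectories do cross and the enclosed-mass-along-a-particle quantity is not conserved. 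This is repairable (replace it with the blunt inequality $m\le M$, which is what drives the actual bounds), but as written it is not a correct conservation law for the VP flow.

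The real gap is in the RVP step. Your initial shell is $[c,c+L]$ with the inner radius fixed at $c$ and $L$ large (you need $L\gtrsim\sqrt{C_2/C_1}$ so that $M\gtrsim C_2$ while $M/(c+L)^2\le C_1$), and you choose $T\gtrsim c+L-r^\ast\approx L$ to keep the outermost particles subluminal. The claim is that the factor $(1+u^2)^{3/2}$ sharpens the Gronwall bound enough to beat the larger $T$. That is true for the particles near $r_0=c+L$, which are forced to be nearly ultra-relativistic, but it fails for particles in the bulk of the shell: a particle starting at $r_0\approx c+L/2$ has $\hat u_0\approx -1/2$, hence $|u_0|=O(1)$, so $(1+u_0^2)^{3/2}=O(1)$ provides no suppression, and its enclosed mass is an $O(1)$ fraction of $M$. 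Its radial deviation from free flow is then of order $MT^2/a^2\approx ML^2/a^2\approx C_2^2/(C_1a^2)$, which blows up as $C_2\to\infty$ and cannot be made smaller than $b-a$; these particles simply will not land in $[a,b]$, and the bootstrap does not close. The paper circumvents this by placing the entire support in a \emph{thin} shell at a \emph{large} radius $a_0$ (not at the fixed inner radius $c$) and giving \emph{every} particle a huge inward momentum $w\approx -a_0^3$, so that all particles are uniformly ultra-relativistic and the suppression factor is $\sim a_0^9$ across the whole support. To salvage your scheme you would need to move the inner edge of the initial shell out to a parameter $a_0\to\infty$ as in the paper, at which point the construction essentially becomes the paper's.
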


Theorem 2 is the result of combining Theorem 6 about the VP system and Theorem 7 about the RVP system, which we will prove in Section 3 and 4 respectively. In the following Section 2, I will first provide some background information concerning the spherical symmetry assumption we made and establish several useful lemmas. Then in Section 3, I will construct a focusing solution for the VP system, by first displaying our settings for parameters and initial data, and then prove that the solution satisfies the theorem. Finally in Section 4, I will construct a focusing solution for the RVP system with similar methodology but different data and details. 
\bigskip
\section*{2. Backgrounds and Lemmas}
\subsection*{2.1. Spherical Symmetry}
In this paper, we assume both the VP and RVP systems to be spherical symmetric. In particular, for both systems, we define the spatial radius $r$, inward velocity $w$, and square of the angular momentum $l$ of particles at initial time as follows:
\[
r = |x|, \quad w = \frac{x \cdot v}{r}, \quad l = |x \times v|^2.
\]
Let $f = f(t, r, w, l)$ be the density distribution of the particles under the spherically symmetric setting. Then using change of variable $(x, v) \rightarrow (r, w, l)$ (see \cite{Arbitrarily large solutions of the Vlasov-Poisson system} and \cite{Focusing solutions of the Vlasov-Poisson System} for more details), we can reduce the Vlasov-Poisson system to:
\begin{equation} \label{VP-spherical-symmetry}
    \partial_t f + w \partial_r f + (\frac{l}{r^3} + \frac{m(t, r)}{r^2}) \partial_w f = 0.
\end{equation}
Similarly (see \cite{Concentrating solutions of the relativistic Vlasov- Maxwell system}), the relativistic Vlasov-Poisson system can be reduced to:
\begin{equation} \label{RVP-spherical-symmetry}
    \partial_t f + \frac{w}{\sqrt{1+w^2 + lr^{-2}}}\partial_r f + \left( \frac{l}{r^3\sqrt{1+w^2 + lr^{-2}}} + \frac{m(t, r)}{r^2} \right) \partial_w f = 0.
\end{equation}
In both equations, we know
\begin{equation} \label{mass-spherical-symmetry}
    m(t, r) = 4 \pi \int^r_0 s^2 \rho(t, s) ds,
\end{equation}
and 
\begin{equation} \label{density-spherical-symmetry}
    \rho(t, r) = \frac{\pi}{r^2} \int^\infty_0 \int^\infty_{-\infty} f(t, r, w, l) dw \, dl.
\end{equation} 
Also, the electric field is given by 
\begin{equation} \label{electric-field-spherical-symmetry}
    E(t, x) = \frac{m(t, r)}{r^2} \frac{x}{r}
\end{equation}
in both systems.\\
The total mass of the particles is 
\begin{equation}
    M = 4\pi^2 \int^\infty_0 \int^\infty_{-\infty}\int^\infty_0 f_0(r, w, l) dldwdr,
\end{equation}
which, as stated before, is conserved. 
\\
\\
Following these descriptions, we can express the characteristics of the Vlasov-Poisson system in such form:
\begin{equation} \label{VP-characteristics}
    \begin{aligned}
        &\frac{d}{ds}\mathscr{R}(s) = \mathscr{W}(s),\\
        &\frac{d}{ds}\mathscr{W}(s) = \frac{\mathscr{L}(s)}{\mathscr{R}(s)^3} + \frac{m(s, \mathscr{R}(s))}{\mathscr{R}(s)^2}, \\
        &\frac{d}{ds}\mathscr{L}(s) = 0,
    \end{aligned}
\end{equation}
with the initial conditions
\begin{equation} \label{VP-characteristics-initial}
    \mathscr{R}(0) = r, \quad \mathscr{W}(0) = w, \quad \mathscr{L}(0) = l.
\end{equation}
Since the angular momentum of particles is conserved in time on the support of $f(t)$, we have that $\mathscr{L}(s) = l$ for all $s \ge 0$. \\
\\
Similarly, we can express the characteristics of the relativistic Vlasovi-Poisson system as:
\begin{equation} \label{RVP-characteristics}
    \begin{aligned}
        &\frac{d}{ds}\mathscr{R}(s) = \frac{\mathscr{W}(s)}{\sqrt{1+\mathscr{W}(s)^2 + \mathscr{L}(s) \mathscr{R}(s)^{-2}}}\\
        &\frac{d}{ds}\mathscr{W}(s) = \frac{\mathscr{L}(s)}{\mathscr{R}(s)^3\sqrt{1+\mathscr{W}(s)^2 + \mathscr{L}(s) \mathscr{R}(s)^{-2}}} + \frac{m(s, \mathscr{R}(s))}{\mathscr{R}(s)^2}, \\
        &\frac{d}{ds}\mathscr{L}(s) = 0,
    \end{aligned}
\end{equation}
which is also subject to the initial conditions 
\begin{equation} \label{RVP-characteristics-initial}
    \mathscr{R}(0) = r, \quad \mathscr{W}(0) = w, \quad \mathscr{L}(0) = l.
\end{equation}
and conservation of angular momentum as well.\\
\\
Finally, for both systems, we denote
\[
S(t) = \{ (r, w, l): f(t, r, w, l) > 0 \},
\]
for $t \ge 0$. In particular, we have 
\[
S(0) = \{ (r, w, l): f_0(r, w, l) > 0 \}.
\]
\subsection*{2.2. Lemmas}
We now give some lemmas that are useful for our proof of the theorem later on. Most parts of these lemmas are established by Jonathan Ben-Artzi, Simone Calogero, and Stephen Pankavich in \cite{Arbitrarily large solutions of the Vlasov-Poisson system} and \cite{Concentrating solutions of the relativistic Vlasov- Maxwell system}. \\
\\
The first lemma mainly concerns with the behavior of the characteristics of the VP system as described by the ODE system in \ref{VP-characteristics} and \ref{VP-characteristics-initial}. In particular, we want to derive an estimate for the particle spatial radius at given time $T>0$ using the initial parameters $r$, $w$, $l$, total mass $M$, and the time $T$.
\begin{lemma}
    Let $l > 0$, $M \ge 0$, $r > 0$, and $w < 0$ be given. Also, let $(\mathscr{R}(t),\, \mathscr{W}(t), \, \mathscr{L}(t))$ be a solution to \ref{VP-characteristics} and \ref{VP-characteristics-initial} for all $t \ge 0$. Then \\
    (1). There exists a unique $T_0 > 0$ such that $\mathscr{W}(t) < 0$ for $t \in [0,\, T_0)$, $\mathscr{W} (T_0) = 0$, and $\mathscr{W}(t) >0$ for $t \in (T_0,\, \infty)$.\\
    (2). $T_0$ satisfies
    \[
    T_0 > \frac{r}{|w|} \left(1- \frac{\sqrt{l+Mr}}{r|w|} \right) = \frac{r}{|w|}- \frac{\sqrt{l+Mr}}{w^2}.
    \]
    (3). For all $t \in [0, \, T_0)$, we have
    \[
    \mathscr{R}(t)^2 \le (r+wt)^2 + (lr^{-2} + Mr^{-1})t^2.
    \]
    (4). Let $0 < T < T_0$. Then $\mathscr{R}(T)$ satisfies
    \[
    \mathscr{R}(T) \ge \frac{l}{2}r^{-3}T^2 + wT + r.
    \]
\end{lemma}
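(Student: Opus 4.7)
The plan is to handle the four parts in the order (1), (4), (3), (2), since each successive item leans on the preceding ones. For \textbf{part (1)}, the positivity $\mathscr{W}'(s) = l/\mathscr{R}(s)^3 + m(s,\mathscr{R}(s))/\mathscr{R}(s)^2 > 0$ (from $l>0$ and $m\ge 0$) gives strict monotonicity of $\mathscr{W}$, so if $\mathscr{W}$ ever vanishes it does so at a unique $T_0$. To rule out $\mathscr{W}$ remaining negative forever I would argue by contradiction: the monotone limit $L := \lim_{s\to\infty}\mathscr{W}(s)\le 0$ exists; $L<0$ forces $\mathscr{R}\to -\infty$ via $\mathscr{R}'=\mathscr{W}$, and $L=0$ forces $\mathscr{R}$ to stay bounded below away from $0$ (else $\mathscr{W}'$ blows up and $\mathscr{W}$ leaves the negative reals first), making $\mathscr{W}'$ uniformly positive and hence $\mathscr{W}\to +\infty$; both are contradictions. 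For \textbf{part (4)}, on $[0,T_0)$ we have $\mathscr{W}<0$ so $\mathscr{R}$ is strictly decreasing and $\mathscr{R}(s)\le r$; dropping the nonnegative term $m/\mathscr{R}^2$ gives $\mathscr{W}'(s)\ge l/r^3$, and integrating twice from the initial data yields the claim.

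For \textbf{part (3)}, the natural starting point is the differential identity
\[
\frac{d}{ds}\bigl(\mathscr{W}^2 + l/\mathscr{R}^2\bigr) = \frac{2m\mathscr{W}}{\mathscr{R}^2},
\]
nonpositive on $[0,T_0)$, yielding the energy bound $\mathscr{W}^2 + l/\mathscr{R}^2 \le w^2 + l/r^2$. I would then compare the actual trajectory with the auxiliary Kepler-type orbit $\tilde{\mathscr{R}}$ defined by $\tilde{\mathscr{W}}' = (l+Mr)/\tilde{\mathscr{R}}^3$ with the same initial data $(r,w)$; exact energy conservation for $\tilde{\mathscr{R}}$ makes $(d^2/dt^2)\tilde{\mathscr{R}}^2 \equiv 2(w^2 + l/r^2 + M/r)$ constant, so $\tilde{\mathscr{R}}(t)^2 = (r+wt)^2 + (l/r^2+M/r)t^2$ identically. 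The hard part will be justifying the comparison $\mathscr{R}(t) \le \tilde{\mathscr{R}}(t)$ on $[0,T_0)$: first I use $\mathscr{R}\le r$ to convert $m/\mathscr{R}^2 \le M/\mathscr{R}^2$ into the refined pointwise force bound $\mathscr{W}'(s)\le (l+Mr)/\mathscr{R}^3$, and then execute a Sturm-type barrier argument on $\phi := \tilde{\mathscr{R}} - \mathscr{R}$ with $\phi(0)=\phi'(0)=0$, noting that at any hypothetical first crossing $t_0>0$ the second-derivative difference reduces to $(Mr-m\mathscr{R}(t_0))/\mathscr{R}(t_0)^3 > 0$ strictly (since $\mathscr{R}(t_0)<r$), which is what prevents $\phi$ from becoming negative.

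For \textbf{part (2)}, strict convexity of $\mathscr{R}$ (from $\mathscr{R}''=\mathscr{W}'>0$) yields $\mathscr{R}(s) \ge r+ws$ for all $s < r/|w|$, so on this range $\mathscr{W}'(s)\le l/(r+ws)^3 + M/(r+ws)^2$. Integrating explicitly in $s$ produces an upper bound on $\mathscr{W}(t)$; choosing $t$ so that $y := r+wt = \sqrt{l+Mr}/|w|$ reduces the requirement $\mathscr{W}(t) < 0$, after multiplying through by $2|w|$, to the algebraic inequality $l/y^2 + 2M/y < 2w^2 + l/r^2 + 2M/r$. I would verify this by noting $lw^2/(l+Mr) \le w^2$, applying the AM-GM estimate $2|w|M/\sqrt{l+Mr} \le w^2 + M^2/(l+Mr)$, and checking the elementary inequality $M^2 r^2 \le l^2 + 3Mlr + 2M^2 r^2$, which is immediate. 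This yields the strict lower bound $T_0 > r/|w| - \sqrt{l+Mr}/w^2$.
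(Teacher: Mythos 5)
The paper only proves part (4) directly; parts (1)--(3) are cited from Lemmas 3 and 4 of Ben-Artzi--Calogero--Pankavich. Your proof of (4) is identical in substance to the paper's (use $\mathscr{W}<0$ on $[0,T_0)$ to get $\mathscr{R}\le r$, drop the nonnegative mass term, integrate twice). Your reconstructions of (1) and (2) are correct as far as I can tell: (1) rests on strict monotonicity of $\mathscr{W}$ together with the energy bound $\mathscr{W}^2+l/\mathscr{R}^2\le w^2+l/r^2$ which keeps $\mathscr{R}$ bounded away from zero, and (2) uses convexity to get the line barrier $\mathscr{R}(s)\ge r+ws$, integrates the resulting force estimate explicitly, evaluates at $y=\sqrt{l+Mr}/|w|$, and closes with AM--GM and a sign check; the algebra there works out.

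Part (3) has a genuine gap. Your Sturm-type barrier argument on $\phi=\tilde{\mathscr{R}}-\mathscr{R}$ claims that a positive second derivative at a ``first crossing'' prevents $\phi$ from going negative, but this does not follow. First, the initial data is degenerate: $\phi(0)=\phi'(0)=0$, and $\phi''(0)=(M-m(0,r))/r^2$ can vanish (e.g.\ if $m(0,r)=M$). Second, and more seriously, at a later time $t_0$ where $\phi$ returns to zero from positive values you only know $\phi'(t_0)\le 0$; if $\phi'(t_0)<0$ then $\phi$ crosses into negative values regardless of the sign of $\phi''(t_0)$. The second-derivative argument only excludes the case $\phi'(t_0)=0$. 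Moreover, on the set where $\phi>0$ you have $\tilde{\mathscr{R}}>\mathscr{R}$, so $(l+Mr)/\tilde{\mathscr{R}}^3<(l+Mr)/\mathscr{R}^3$, and the pointwise force bound $\mathscr{W}'\le(l+Mr)/\mathscr{R}^3$ does not yield $\mathscr{W}'\le\tilde{\mathscr{W}}'$; the comparison at the level of first derivatives is genuinely two-sided and cannot be resolved by a local second-derivative estimate at crossings. What is missing is a control on $\phi'$ at crossings. One way to supply it: show that $\mathcal{E}(t)=\mathscr{W}(t)^2+l/\mathscr{R}(t)^2+2M/\mathscr{R}(t)$ is non-decreasing on $[0,T_0)$ (its derivative is $2(m-M)\mathscr{W}/\mathscr{R}^2\ge 0$ there), while the auxiliary orbit conserves $\tilde{\mathscr{W}}^2+(l+Mr)/\tilde{\mathscr{R}}^2$; comparing the two at a common radius $\rho\le r$ gives $\mathscr{W}^2-\tilde{\mathscr{W}}^2\ge M(\rho-r)^2/(r\rho^2)\ge 0$, hence $\mathscr{W}\le\tilde{\mathscr{W}}$ at any crossing, i.e.\ $\phi'\ge 0$ there. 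That is the piece your argument lacks.
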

\begin{proof}
    One can find the detailed proof of (1), (2), and (3) from the lemma 3 and 4 in \cite{Arbitrarily large solutions of the Vlasov-Poisson system}. Thus we only prove (4) here. (1) indicates that for $t \le T_0$, $\mathscr{W}(t) = \dot{\mathscr{R}}(t) \le 0$. Therefore, choose time $T$ such that $0 < T < T_0$, then we know that $\mathscr{R}(T) \le \mathscr{R}(0) = r$. Substituting this result into \ref{VP-characteristics} gives
\[
\ddot{\mathscr{R}}(T) \ge l\mathscr{R}(T)^{-3} \ge l\mathscr{R}(0)^{-3}.
\]
Thus we can further deduce that
\[
\dot{\mathscr{R}}(T) \ge lR(0)^{-3}T + \dot{\mathscr{R}}(0),
\]
\[
\mathscr{R}(T) \ge \frac{l}{2} \mathscr{R}(0)^{-3}T^2 + \dot{\mathscr{R}}(0)T + \mathscr{R}(0),
\]
which gives the result.
\end{proof}
The second lemma is an analogue of Lemma 3 for the ODE system associated with the RVP system, as described in \ref{RVP-characteristics} and \ref{RVP-characteristics-initial}.
\begin{lemma}
    Let $l > 0$, $M \ge 0$, $r > 0$, and $w < 0$ be given. Also, let $(\mathscr{R}(t),\, \mathscr{W}(t), \, \mathscr{L}(t))$ be a solution to \ref{RVP-characteristics} and \ref{RVP-characteristics-initial} for all $t \ge 0$, and define
    \[
    D = l + Mr\sqrt{1+w^2 + lr^{-2}}.
    \]
    Then \\
    (1). There exists a unique $T_0 > 0$ such that $\mathscr{W}(t) < 0$ for $t \in [0,\, T_0)$, $\mathscr{W} (T_0) = 0$, and $\mathscr{W}(t) >0$ for $t \in (T_0,\, \infty)$.\\
    (2). $T_0$ satisfies the bounds
    \[
    r \left(1 - \sqrt{\frac{D}{r^2w^2+D}} \right) \le T_0 \le \frac{-wr^3\sqrt{1+w^2+lr^{-2}}}{l}.
    \]
    (3). For all $t \in [0, \, T_0)$, we have
    \[
    \mathscr{W}(t)^2 + l\mathscr{R}(t)^{-2} \le w^2 + lr^{-2}.
    \]
    (4). For all $t \in [0, \, T_0)$, we have
    \[
    \mathscr{R}(t)^2 \le \left(r- \frac{|w|}{\sqrt{1+w^2+lr^{-2}}}t \right)^2 + \frac{D}{r^2(1+w^2+lr^{-2})}t^2.
    \]
    (5). Let $0 < T < T_0$. Then $\mathscr{R}(T)$ satisfies
    \[
    \mathscr{R}(T) \ge \frac{l}{2r^3(1+w^2 + lr^{-2})}T^2 + \frac{w}{\sqrt{1+w^2+lr^{-2}}}T + r.
    \]
\end{lemma}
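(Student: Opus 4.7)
The plan is to mirror the structure of Lemma 3: parts $(1)$, $(2)$, and $(4)$ either follow directly from the analysis of the relativistic characteristics in the cited Ben-Artzi--Calogero--Pankavich paper on the RVP system, or from the same arguments with only cosmetic relativistic modifications, and so I would just cite them. Parts $(3)$ and $(5)$ deserve independent short proofs. The key observation is that $(3)$ is the precise relativistic replacement for the conservation law $\mathscr{W}^2+l\mathscr{R}^{-2}$ used in the non-relativistic case; it provides the a priori control on the Lorentz factor $\gamma(t) := \sqrt{1+\mathscr{W}(t)^2+l\mathscr{R}(t)^{-2}}$ that makes $(5)$ work in the relativistic setting.

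For $(3)$, I would differentiate $E(t) := \mathscr{W}(t)^2 + l\mathscr{R}(t)^{-2}$ along a characteristic. Substituting the RVP equations \ref{RVP-characteristics}, the two terms proportional to $l/\mathscr{R}^3$ (one coming from $2\mathscr{W}\dot{\mathscr{W}}$, the other from $-2l\mathscr{R}^{-3}\dot{\mathscr{R}}$) cancel exactly, leaving
\[
\dot{E}(t) \;=\; \frac{2\,\mathscr{W}(t)\,m(t,\mathscr{R}(t))}{\mathscr{R}(t)^2}.
\]
On $[0, T_0)$, part $(1)$ gives $\mathscr{W}\le 0$ and by definition $m\ge 0$, so $\dot{E}\le 0$. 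Integrating yields $E(t)\le E(0)=w^2+lr^{-2}$, i.e., $(3)$, and equivalently $\gamma(t)\le \gamma_0 := \sqrt{1+w^2+lr^{-2}}$ on $[0, T_0)$.

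For $(5)$, the strategy is to obtain a uniform lower bound on $\ddot{\mathscr{R}}$ on $[0, T]$ and integrate twice. Differentiating $\dot{\mathscr{R}}=\mathscr{W}/\gamma$ and using the same cancellation as above to compute $\dot\gamma = \mathscr{W}\,m/(\mathscr{R}^2\gamma)$, a short algebraic simplification gives
\[
\ddot{\mathscr{R}} \;=\; \frac{l}{\mathscr{R}^3\gamma^2} \;+\; \frac{m\,(1+l\mathscr{R}^{-2})}{\mathscr{R}^2\gamma^3},
\]
both of whose terms are nonnegative. Dropping the second, $\ddot{\mathscr{R}}(t)\ge l/(\mathscr{R}(t)^3\gamma(t)^2)$. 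On $[0, T]\subset [0, T_0)$, part $(1)$ forces $\mathscr{R}(t)\le r$ and $(3)$ forces $\gamma(t)\le \gamma_0$, so $\ddot{\mathscr{R}}(t)\ge l/(r^3\gamma_0^2)$ on this interval. Integrating twice with $\mathscr{R}(0)=r$ and $\dot{\mathscr{R}}(0)=w/\gamma_0$ yields exactly the displayed bound.

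The main obstacle compared to Lemma 3(4) is bookkeeping with the Lorentz factor. In the VP case, $\ddot{\mathscr{R}}=l\mathscr{R}^{-3}+m\mathscr{R}^{-2}$ is a direct consequence of the characteristic ODE, and one only needs $\mathscr{R}\le r$ on $[0,T_0)$. Here $\ddot{\mathscr{R}}$ acquires an extra contribution from $\dot\gamma$ that initially looks like it could have either sign; its nonnegativity, and indeed the clean expression above, emerges only after the very cancellation that produced $(3)$. Thus $(3)$ and $(5)$ really have to be proved together, and once the cancellation is identified the remainder is a faithful adaptation of the non-relativistic argument.
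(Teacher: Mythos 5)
Your outline is correct, and for part $(5)$ your argument is in fact cleaner and more careful than the paper's. Writing $\gamma(s)=\sqrt{1+\mathscr{W}(s)^2+\mathscr{L}(s)\mathscr{R}(s)^{-2}}$ and $\gamma_0=\sqrt{1+w^2+lr^{-2}}$, the paper's proof of $(5)$ opens by asserting that part $(3)$ gives the pointwise bound $\dot{\mathscr{R}}(s)=\mathscr{W}(s)/\gamma(s)\ge \mathscr{W}(s)/\gamma_0$ on $[0,T_0)$, and then combines this with the integrated lower bound on $\mathscr{W}$. But $(3)$ gives $\gamma(s)\le\gamma_0$ while $(1)$ gives $\mathscr{W}(s)<0$, so that inequality actually runs the other way: dividing a negative quantity by a smaller positive number makes it more negative, i.e.\ $\mathscr{W}(s)/\gamma(s)\le\mathscr{W}(s)/\gamma_0$. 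The final estimate is nonetheless true, and your route --- computing $\ddot{\mathscr{R}}$ directly, using the same cancellation that produces $(3)$ to get $\dot{\gamma}=m\mathscr{W}/(\mathscr{R}^2\gamma)$ and hence
\[
\ddot{\mathscr{R}}=\frac{l}{\mathscr{R}^3\gamma^2}+\frac{m\,(1+l\mathscr{R}^{-2})}{\mathscr{R}^2\gamma^3}\;\ge\;\frac{l}{r^3\gamma_0^2},
\]
then integrating twice from $\mathscr{R}(0)=r$, $\dot{\mathscr{R}}(0)=w/\gamma_0$ --- is a valid derivation that sidesteps the sign issue entirely. Your direct verification of $(3)$, via $\frac{d}{dt}\bigl(\mathscr{W}^2+l\mathscr{R}^{-2}\bigr)=2m\mathscr{W}\mathscr{R}^{-2}\le 0$ on $[0,T_0)$, is also correct; the paper simply defers to the Ben-Artzi--Calogero--Pankavich RVP paper for $(1)$--$(4)$, so here you supply a self-contained argument where the paper cites. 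The observation that the identity $\dot{\gamma}=m\mathscr{W}/(\mathscr{R}^2\gamma)$ underlies both $(3)$ and $(5)$ is exactly right and is what makes this decomposition work cleanly.
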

\hfill\break
\begin{proof}
    One can find the detailed proof for (1) to (4) from lemma 3 in \cite{Concentrating solutions of the relativistic Vlasov- Maxwell system}, so we prove for (5) here. (3) informs us that 
    \[
    \frac{d}{ds}\mathscr{R}(s) = \frac{\mathscr{W}(s)}{\sqrt{1+\mathscr{W}(s)^2 + \mathscr{L}(s) \mathscr{R}(s)^{-2}}} \ge \frac{\mathscr{W}(s)}{\sqrt{1+w^2+lr^{-2}}},
    \]
    and 
    \[
    \frac{d}{ds}\mathscr{W}(s) \ge \frac{\mathscr{L}(s)}{\mathscr{R}(s)^3\sqrt{1+\mathscr{W}(s)^2 + \mathscr{L}(s) \mathscr{R}(s)^{-2}}}
    \ge \frac{\mathscr{L}(s)}{r^3 \sqrt{1+w^2+lr^{-2}}}
    \]
    Thus
    \[
    \mathscr{W}(T) \ge \frac{l}{r^3\sqrt{1+w^2+lr^{-2}}}T +w.
    \]
    Then we have
    \[
    \frac{d}{ds}\mathscr{R}(T) \ge \frac{l}{r^3(1+w^2 + lr^{-2})}T + \frac{w}{\sqrt{1+w^2+lr^{-2}}}
    \]
    and finally
    \[
    \mathscr{R}(T) \ge \frac{l}{2r^3(1+w^2 + lr^{-2})}T^2 + \frac{w}{\sqrt{1+w^2+lr^{-2}}}T + r.
    \]
\end{proof}
The last lemma provides us with a lower bound for the $L^\infty$ norm of the particle charge density and the electric field at time $t >0$, which can be applied to both the VP and RVP system.
\begin{lemma}
    Let $f(t, r, w, l)$ be a spherically symmetric solution of the VP or RVP system with associated charge density $\rho(t, r)$ and electric field $E(t, x)$. \\
    Also, let $(\mathscr{R}(t, 0, r, w, l),\, \mathscr{W}(t, 0, r, w, l), \, \mathscr{L}(t, 0, r, w, l))$ be a characteristic solution of \ref{VP-characteristics}. If at some $T \ge 0$ we have 
    \[
    a \le \sup_{(r, w, l)\in S(0)} \mathscr{R}(T, 0, r, w, l) \le b,
    \]
    then
    \[
    \lVert \rho (T) \rVert_{\infty} \ge \frac{3M}{4\pi(b^3 - a^3)},
    \]
    and 
    \[
    \lVert E(T) \rVert_{\infty} \ge \frac{M}{b^2}.
    \]
\end{lemma}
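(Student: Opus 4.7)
The plan is to combine two ingredients: conservation of the total mass $M$, and the spherically-symmetric Gauss-law identity $|E(t,x)| = m(t,|x|)/|x|^2$ visible in~\ref{electric-field-spherical-symmetry}. The hypothesis controls the radial extent of the particles at time $T$, and since both the VP and RVP Vlasov equations transport $f$ along the characteristic flow, the support of $f(T,\cdot,\cdot,\cdot)$ is the image of $S(0)$ under the corresponding flow map. Thus the bound on the characteristic radius translates into a statement about the spatial support of $\rho(T,\cdot)$, which I will interpret as saying $\rho(T,\cdot)$ is supported in the annulus $\{x\in\mathbb{R}^3 : a\le|x|\le b\}$.

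With that support information, the first inequality is a direct volume computation: mass conservation gives
\[
M \;=\; \int_{\mathbb{R}^3}\rho(T,x)\,dx \;=\; \int_{a\le|x|\le b}\rho(T,x)\,dx \;\le\; \|\rho(T)\|_\infty\cdot\tfrac{4\pi}{3}(b^3-a^3),
\]
and dividing through yields $\|\rho(T)\|_\infty \ge 3M/[4\pi(b^3-a^3)]$. For the field bound I would use that $\rho(T,\cdot)$ vanishes for $r>b$, so by~\ref{mass-spherical-symmetry} the enclosed mass satisfies $m(T,r) = M$ for every $r\ge b$. Evaluating $|E(T,x)| = m(T,|x|)/|x|^2$ at $|x|=b$ then produces a point at which the magnitude of $E(T,\cdot)$ equals $M/b^2$, so $\|E(T)\|_\infty \ge M/b^2$.

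Neither step is arithmetically hard; the main obstacle is the bookkeeping that converts the characteristic hypothesis into a statement about $\mathrm{supp}\,\rho(T,\cdot)$. Specifically, one needs to invoke that $f$ is constant along characteristics and that $(r,w,l)\mapsto\mathscr{R}(T,0,r,w,l)$ determines which radii are populated at time $T$, so that $\sup\mathscr{R}(T)\le b$ forces $\rho(T,r)=0$ for $r>b$, while the matching inner confinement $\inf\mathscr{R}(T)\ge a$ (which is what is actually used in the applications of this lemma in Sections~3 and~4, via Lemmas~3 and~4) forces $\rho(T,r)=0$ for $r<a$. Once that translation is in place, the two estimates reduce to the one-line arguments displayed above.
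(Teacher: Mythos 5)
Your argument is correct and is the natural one. Note that the paper gives no proof of its own here---it simply cites Lemma 5 of Ben-Artzi--Calogero--Pankavich (VP) and Lemma 4 of the companion RVP paper---and those cited lemmas treat the \emph{ball} case $\mathscr{R}(T)\le\sigma$, with conclusions $\|\rho(T)\|_\infty\ge 3M/(4\pi\sigma^3)$ and $\|E(T)\|_\infty\ge M/\sigma^2$; the annular version stated here is a small adaptation that the present paper leaves implicit. Your proof supplies exactly that adaptation: replace the ball volume $\tfrac{4\pi}{3}\sigma^3$ by the shell volume $\tfrac{4\pi}{3}(b^3-a^3)$ for the density estimate, and leave the Gauss-law step $|E(T,x)|=m(T,|x|)/|x|^2$, evaluated at $|x|=b$ where $m(T,b)=M$, unchanged for the field estimate. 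You are also right to flag that the hypothesis ``$a\le\sup_{(r,w,l)\in S(0)}\mathscr{R}(T)\le b$'' does not, read literally, force the support of $\rho(T,\cdot)$ into the annulus: bounding the supremum below by $a$ only guarantees \emph{some} particle lies outside radius $a$, whereas the factor $b^3-a^3$ in the conclusion needs \emph{every} characteristic to satisfy $\mathscr{R}(T)\ge a$. The lemma should be stated with a universal quantifier (or with both an $\inf$ and a $\sup$), and that stronger hypothesis is precisely what is verified in the applications in Sections 3 and 4 via Lemmas 3(4) and 4(5), so Theorems 6 and 7 are unaffected. With that reading, your two one-line estimates close the proof.
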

\hfill\break
\begin{proof}
    Please see lemma 5 in \cite{Arbitrarily large solutions of the Vlasov-Poisson system} or lemma 4 in \cite{Concentrating solutions of the relativistic Vlasov- Maxwell system}. 
\end{proof}
\bigskip

\section*{3.Focusing Solution of the Vlasov-Poisson System}
In this section we attempt to establish the following result:
\begin{theorem}
    For any constant $C_1$, $C_2 > 0$ and $c > b > a > 0$, there exists a smooth, spherically symmetric solution of the Vlasov-Poisson system such that for $(r, w, l) \in S(0)$ we have 
    \[
    r \ge c,\quad  \text{and} \quad a \le \mathscr{R}(T, 0, r, w, l) \le b
    \]
    for some time $T>0$.\\
    Moreover, we have the estimation that
    \[
    \lVert \rho (0) \rVert_{\infty}, \quad  \lVert E (0) \rVert_{\infty} \le C_1
    \]
    and
    \[
    \lVert \rho (T) \rVert_{\infty}, \quad  \lVert E (T) \rVert_{\infty} \ge C_2
    \]
\end{theorem}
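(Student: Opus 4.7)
The plan is to distribute the initial mass over a thick annular shell $\{r \in [c, R]\}$ with $R$ chosen much larger than $c$, and to assign each particle at initial radius $r$ an inward radial velocity $w \approx -(r - r_\ast)/T$, where $r_\ast \in (a, b)$ is a common target focus. The straight-line projection then carries every particle exactly to $r_\ast$ at time $T$; the self-field and angular-momentum corrections in \eqref{VP-characteristics} perturb the final radius but can be kept inside $[a, b]$ by taking $T$ and $l$ small. The key reason for spreading the mass over a long radial range, rather than concentrating it in a thin shell near $r = c$, is that $\|\rho(0)\|_\infty$ and $\|E(0)\|_\infty$ scale like $M/(R-c)$ (up to constants depending only on $c$), so one can keep both below $C_1$ while still taking $M$ as large as needed for Lemma 5 to deliver the required final bounds.

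Concretely, I would fix $r_\ast := (a+b)/2$, pick smooth nonnegative bump functions $\psi, \phi, \chi$ with $\mathrm{supp}\,\psi \subset [0,1]$, $\mathrm{supp}\,\phi \subset [-1,1]$, $\mathrm{supp}\,\chi \subset [0,1]$, and define
\[
f_0(r, w, l) := A\, \psi\!\left(\frac{r-c}{R-c}\right) \phi\!\left(\frac{w + (r-r_\ast)/T}{\delta_w}\right) \chi\!\left(\frac{l}{l_0}\right),
\]
with parameters $A, R, T, \delta_w, l_0 > 0$ to be chosen in sequence. Direct computation from \eqref{mass-spherical-symmetry}, \eqref{density-spherical-symmetry}, and \eqref{electric-field-spherical-symmetry} then gives $M = K_1 A(R-c)\delta_w l_0$, $\|\rho(0)\|_\infty \le K_2\, M/(c^2(R-c))$, and $\|E(0)\|_\infty \le K_3\, M/(c(R-c))$ for absolute constants $K_1, K_2, K_3$ depending only on the bumps. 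For every characteristic in the support of $f_0$ one has $r + wT = r_\ast + O(\delta_w T)$, so Lemma 3(3) bounds
\[
\mathscr{R}(T)^2 \le r_\ast^2 + O(\delta_w T) + (l_0 c^{-2} + M c^{-1})T^2,
\]
while Lemma 3(4) yields the matching lower bound $\mathscr{R}(T) \ge r_\ast - O(\delta_w T)$. The turnaround condition $T < T_0$ required by Lemma 3 follows from part (2) once $T$ is sufficiently small.

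The proof would then conclude by selecting the parameters in the following order. First, set $M := \max\{C_2 b^2,\, \tfrac{4\pi}{3} C_2 (b^3 - a^3)\}$ so that Lemma 5 at time $T$ guarantees $\|\rho(T)\|_\infty \ge C_2$ and $\|E(T)\|_\infty \ge C_2$. Next, choose $R - c$ large enough that the displayed upper bounds on $\|\rho(0)\|_\infty$ and $\|E(0)\|_\infty$ are both at most $C_1$. Then pick $T$ small enough that the perturbation in the focusing estimate is strictly less than $\min\{b^2 - r_\ast^2,\, r_\ast^2 - a^2\}/2$, followed by $\delta_w$ and $l_0$ small enough to close the bounds at the endpoints of $[a,b]$, and finally fix $A$ by the prescribed $M$. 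The main technical obstacle I anticipate is reconciling the smallness of $T$ (needed to dominate the self-consistent perturbation $(M/c)T^2$) with the largeness of $M$: this is resolved by noting that $T$ enters the perturbation only through $T^2$, and that shrinking $T$ only forces the initial speeds $|w| \sim (r - r_\ast)/T$ to become large, which is not constrained by the theorem. A routine mollification of any boundary artifacts delivers a smooth, spherically symmetric $f_0$, and Lemma 5 closes the proof.
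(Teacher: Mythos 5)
Your construction is correct in its essentials, but it takes a genuinely different route from the paper's. The paper concentrates all the initial mass in a \emph{thin} annulus $[a_0-\epsilon,\,a_0+\epsilon]$ of fixed thickness $2\epsilon$ (with $\epsilon$ determined by $a,b$ alone) at a \emph{large} radius $a_0$, and then lets $a_0\to\infty$ to drive $\|\rho(0)\|_\infty\sim 1/a_0$ and $\|E(0)\|_\infty\sim\epsilon/a_0$ to zero; in that scheme the total mass $M\sim a_0\epsilon$ is not prescribed in advance but \emph{grows} with $a_0$, and the last entries in \eqref{a_0-large-enough} are there precisely to force $M$ past the threshold $\max\{C_2 b^2,\tfrac{4\pi}{3}C_2(b^3-a^3)\}$ needed by Lemma~5. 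You instead fix $M$ at that threshold from the start and dilute it over a \emph{thick} shell $[c,R]$, letting $R\to\infty$; the scalings $\|\rho(0)\|_\infty\lesssim M/(c^2(R-c))$ and $\|E(0)\|_\infty\lesssim M/(c(R-c))$ are correct (the field maximum sits near $r\sim 2c$ for $R\gg c$), so the smallness of the initial data and the size of $M$ are decoupled, which is arguably the cleaner logical structure. You also aim every characteristic at a single focus $r_*=(a+b)/2$ with spread $\delta_w$, whereas the paper lets the free-flight radius $r+wT$ range over the whole interval $[a,\sqrt{k}b]$; both variants close under Lemma~3, but yours needs the extra parameter $\delta_w$ and the margin $\min\{r_*-a,\,b-r_*\}$, while the paper's $k$ and $\epsilon$ play the analogous role.

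Two small points to tighten. First, Lemma~3 is stated for $l>0$ and $w<0$; your cutoff $\chi(l/l_0)$ with $\mathrm{supp}\,\chi\subset[0,1]$ lets $l=0$ enter $S(0)$, so take $\mathrm{supp}\,\chi\subset(0,1]$ (the paper imposes $l>0$ explicitly in its condition (iv)). Second, in verifying $T<T_0$ from Lemma~3(2) you should check both ends of the radial range: at $r\approx c$ the condition reduces to $T<r_*(c-r_*)/\sqrt{l_0+Mc}$, which is a fixed bound since $c>b>r_*$, while at $r\approx R$ the admissible window for $T$ only widens as $R\to\infty$; so the constraint is indeed driven by the inner edge and is compatible with your ordering of parameter choices, but it is worth stating rather than leaving to ``$T$ sufficiently small.'' Otherwise the argument closes as claimed, and no mollification beyond choosing the bump supports appropriately is actually required --- $f_0$ as written is already a smooth function of $(x,v)$ because its support is bounded away from $x=0$.
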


\subsection*{3.1. Parameters and Initial Data Settings}
We now begin the construction of the focusing solution. Given constants $C_1,\, C_2 >0$ and $c>b>a>0$, we set a few parameters and choose a smooth, spherically symmetric solution of the VP system that satisfies the following conditions:\\
\begin{enumerate}[label=(\roman*)]
    \item Define constant $k = (1+\frac{a^2}{b^2})\div2 = \frac{a^2+b^2}{2b^2}$, and $\epsilon = \frac{\sqrt{k}b-a}{8} = \frac{\sqrt{a^2+b^2}-\sqrt{2}a}{8\sqrt{2}}$. Also, choose $a_0 >0$ large enough such that
\begin{equation}\label{a_0-large-enough}
\begin{aligned}
    a_0 \ge \text{max} \{ &\frac{\sqrt{a^2+b^2}-\sqrt{2}a}{8\sqrt{2}}+c,\,
    C_1^{-1},\,
    3\sqrt{2}\pi(\sqrt{a^2+b^2}-\sqrt{2}a)C_1^{-1},  \,\\ 
    &2^{\frac{-11}{6}}\pi^{\frac{1}{3}}(\sqrt{a^2+b^2}-\sqrt{2}a)C_1^{-\frac{1}{3}},\,
    \frac{16\sqrt{2}(b^3-a^3)C_2}{3(\sqrt{a^2+b^2}-\sqrt{2}a)},\,
    \frac{4\sqrt{2}b^2C_2}{\pi (\sqrt{a^2+b^2}-\sqrt{2}a)}
    \}.        
\end{aligned}
\end{equation}

\item For $(r, w, l) \in S(0)$, we have
\begin{equation}\label{initial-radius}
   a_0 - \epsilon < r < a_0 + \epsilon.
\end{equation}
Moreover, the initial charge density $\rho_0$ satisfies
\begin{equation}\label{initial-density-upper-bound}
    \rho_0(r) \le \frac{1}{a_0} \quad \text{for all } r >0,
\end{equation}
and 
\begin{equation} \label{initial-density-lower-bound}
    \rho_0(r) = \frac{1}{2a_0} \quad \text{for } r \in [a_0 - \frac{1}{2}\epsilon, \, a_0 + \frac{1}{2}\epsilon].
\end{equation}

\item We define time $T>0$ small enough such that 
\begin{equation} \label{T-setting}
    \begin{aligned}
        T \le \text{min}\{&\frac{a(|\sqrt{k}b-a_0-\epsilon|)}{\sqrt{l+(2\pi a_0 \epsilon + \frac{\pi}{6}a_0^{-1}\epsilon)(a_0-\epsilon)}}, \, \\
        &\sqrt{\frac{1-k}{l(a_0-\epsilon)^{-2}+(8\pi a_0\epsilon + \frac{8\pi}{3}a_0^{-1}\epsilon^3)(a_0-\epsilon)^{-1}}}b\,
        \}.
    \end{aligned}
\end{equation}

\item Finally, for $(r, w, l) \in S(0)$, we let 
\begin{equation} \label{initial-inward-velocity}
    \frac{a-a_0 + \epsilon}{T} \le w \le \frac{\sqrt{k}b - a_0 - \epsilon}{T},
\end{equation}
and we require $l>0$.
\end{enumerate}

\begin{remark}
    In order for the inequality in \ref{initial-inward-velocity} to be valid, we need to check $a-a_0 + \epsilon \le \sqrt{k}b - a_0 - \epsilon $, namely, 
    \[
    \epsilon < \frac{\sqrt{k}b-a}{2}.
    \]
    Our choice of constant $\epsilon$ can verify this point.
\end{remark}
\hfill\break
\begin{remark}
    To see that these choices of parameters are feasible, we now provide an example of a smooth function that satisfy all the above conditions.\\
    \\
    First of all, based on the given constants $C_1$, $C_2$, $a$, $b$, and $c$, we first fix constants $k$, $\epsilon$, and $a_0$ that satisfy the requirements in (i), and choose constant $T>0$ that it satisfies (iii).\\
    Now we choose $H: \, [0, \infty) \rightarrow [0, \infty)$ to be a smooth function that satisfies 
    \[
    \int_{\mathbf{R}^3} H(|u|^2) du = \frac{T^3}{2a_0}
    \]
    with $supp(H) \subset [0, 1]$. We then re-scale this function with a scalar $\delta > 0$ by defining
    \[
    H_\delta (|u|^2) = \frac{1}{\delta^3} H(\frac{|u|^2}{\delta^2}).
    \]
    Then we will find that 
    \[
    \int_{\mathbf{R}^3} H_\delta(|u|^2) du = \frac{T^3}{2a_0}
    \]
    and $supp(H_\delta) \subset [0, \delta^2]$. \\
    \\
    Next, we choose a smooth cut-off function $\chi \in C^\infty ((0, \infty), [0, 1])$ such that 
    \[
    \begin{cases} 
      \chi(r) = 1 &\text{for   } \quad a_0 -\frac{1}{2}\epsilon \le |r| \le a_0 + \frac{1}{2}\epsilon \\
      \chi(r) = 0 & \text{for   } \quad|r| \le a_0 - \epsilon \text{ or } |r| \ge a_0 + \epsilon \\
   \end{cases}
    \]
    We then fix constants $d = \frac{a+\sqrt{k}b}{2}$, $\delta = \frac{\sqrt{k}b-a - 4\epsilon}{2} = \frac{\sqrt{k}b-a}{4}> 0$, and define function
    \[
    f_0(x, v) = H_\delta (|\left(1-\frac{d}{|x|} \right)x + Tv|^2) \chi(|x|).
    \]
    Clearly $f_0$ is smooth, by claiming that it satisfies the requirements in (ii) and (iv) we finish our construction of the solution.\\
    \\
    To verify this, we first notice that $r = |x|$, so that we can obtain from the definition of $\chi$ that 
    \[
    a_0 - \epsilon \le r \le a_0 + \epsilon.
    \]
    Then an immediate observation is that $ d = \frac{a+ \sqrt{k}b}{2} \le a_0 - \epsilon \le r$ by our choice of parameters in (i). Thus we know $0 < 1-\frac{d}{|x|}<1$. \\
    Next, the support of $h_\delta$ indicates us that
    \[
    |\left(1-\frac{d}{|x|} \right)x+Tv|^2 \le \delta^2.
    \]
    Using change of variables and the relation $|v|^2 = w^2 + lr^{-2}$, we can obtain that 
    \[
    (r -d + Tw)^2 + T^2lr^{-2} \le \delta^2,
    \]
    and thus $|r-d+ Tw| \le \delta$. Expanding this inequality informs us that
    \[
    \frac{d - \delta -r}{T} \le w \le \frac{d + \delta -r}{T}.
    \]
    We've set $d$ and $\delta$ such that $d - \delta = a + 2\epsilon$ and $d + \delta = \sqrt{k}b- 2\epsilon$. Moreover, we know that $a_0 - \epsilon \le r \le a_0 + \epsilon$. Therefore we can deduce that 
    \[
    \frac{a+ 2\epsilon - (a_0 + \epsilon)}{T} \le \frac{d-\delta - r}{T}  \le w \le \frac{d+ \delta -r}{T}\le \frac{\sqrt{k}b -2\epsilon - (a_0 - \epsilon)}{T}.
    \]
    which satisfies the requirement in (iv).\\
    \\
    Finally, to check that the solution satisfies (ii), we notice that 
    \[
    \int_{\mathbf{R}^3} H_\delta (|\left(1-\frac{d}{|x|} \right)x + Tv|^2) dv = \frac{1}{T^3} \int_{\mathbf{R}^3} H_\delta (|u|^2)du = \frac{1}{2a_0}.
    \]   
    and thus we can calculate 
    \[
    \rho_0 (x) = \int_{\mathbf{R}^3} f_0 (x, v) = \left( \int_{\mathbf{R}^3} H_\delta (|\left(1-\frac{d}{|x|} \right)x + Tv|^2) dv \right) \chi(|x|)  = \frac{1}{2a_0} \chi(|x|),
    \]
    which satisfies the requirements in (ii).\\
    Therefore, we've verified $f_0(x, v)$ to be a smooth function that satisfies all the settings for the initial data.
\end{remark}
\begin{remark}
    The choice of $a_0$ might seem complicated, but using the expression of orders we can see how it's connected to other given constants.
    \begin{itemize}
        \item $a_0 \sim O(C_1^{-1})$. $C_1$ is the required upper bound for $\lVert \rho (0) \rVert_{\infty}$ and $\lVert E(0) \rVert_{\infty}$. $a_0$ being inversely related to $C_1$ indicates that at time zero, if we want the charge density and electric field to be small, then the particles should be placed further from the origin, so that they're distributed more scatteringly.
        \item $a_0 \sim O(C_2)$. $C_2$ is the required lower bound for $\lVert \rho (T) \rVert_{\infty}$ and $\lVert E(T) \rVert_{\infty}$. $a_0$ being positively relating to $C_2$ indicate that larger concentrating density and electric field requires larger spatial radius of particle distribution at time zero. This is generally because such distribution allows a larger total mass of particles, which makes a stronger concentration effect possible.
        \item $a_0 \ge \frac{8\sqrt{2}(b^3-a^3)C_2}{3(\sqrt{a^2+b^2}-\sqrt{2}a)} \ge \frac{2(b^3- a^3) C_2}{3 \epsilon}$, showing that $a_0$ is also positively related to the thickness and volume of the targeting shield of concentration, which is $\sim O(b^3 - a^3)$.
    \end{itemize}
Moreover, we can use the conditions on $a_0$ to derive an estimation on the scale and order of other parameters, such as the total particle mass $M$ and the initial charge density $\rho_0$. In fact, these estimates are the strongest motivation behind our complicated choice of $a_0$, as we will see how they help with our calculation and proof later on.  
    \begin{itemize}
      \item \ref{initial-density-lower-bound} and \ref{initial-density-upper-bound} indicates that $\rho_0 \sim O(a_0^{-1})$. The inverse relationship between $\rho_0$ and $a_0$ not only helps us meet the upper bound requirement on $ \lVert \rho (0) \rVert_{\infty}$, but also help us control the order of the total particle mass. 
\item An immediate result following the relationship between $\rho_0$ and $a_0$, and the inequailties \ref{initial-radius}, \ref{initial-density-upper-bound}, and \ref{initial-density-lower-bound}, is that we can derive a range for the total particle mass $M$:
    \begin{equation} \label{mass-upper-bound}
    \begin{aligned}
        M &= \int_{\mathbb{R}^3}\rho_0(x)dx = 4\pi \int^{a_0+\epsilon}_{a_0-\epsilon}\rho_0(r) r^2dr
        \le \frac{4\pi}{a_0}\int^{a_0+\epsilon}_{a_0-\epsilon}\rho_0(r) r^2dr\\
        &\le 8\pi a_0\epsilon + \frac{8\pi}{3}a_0^{-1}\epsilon^3,
    \end{aligned}
\end{equation}
    and 
    \begin{equation} \label{mass-lower-bound}
    \begin{aligned}
        M &\ge 4\pi \int^{a_0+\frac{1}{2}\epsilon}_{a_0-\frac{1}{2}\epsilon}\rho_0(r)r^2dr = 
        \frac{2\pi}{a_0}\int^{a_0+\frac{1}{2}\epsilon}_{a_0-\frac{1}{2}\epsilon}r^2dr\\
        &= 2\pi a_0 \epsilon + \frac{\pi}{6}a_0^{-1}\epsilon^3.                
    \end{aligned}
\end{equation}
      
        \item Finally, \ref{initial-inward-velocity} tells us that \[
        a - a_0 + \epsilon \le a-r \le wT \le\sqrt{k}b-r \le \sqrt{kb}-a_0-\epsilon.
        \]
        $r + wT$ is a rough estimation of the particle position at time $T$. Thus our choice of $T$ in \ref{T-setting} and $w$ in \ref{initial-inward-velocity} is meant to control the particle position at time of concentration. 
    \end{itemize}
\end{remark}

\subsection*{3.2. Proof of Theorem 6}
\begin{proof}
Our strategy is to set the particles far away from the targeting shield at time zero, and endow them with large enough inward velocity. Then by balancing between the initial inward velocity $w$ and the time $T$, we can make the particles concentrate within the targeting shield in a short time period while they're still moving inwards. Since we set the initial particle spatial radius $r$ to be large enough, the particles are scattering at time zero and have low charge density and electric field. Meanwhile, the total particle mass $M$ is controlled to be large enough, so that both the charge density and electric field will rise drastically once the particles concentrate at time $T$.\\
\\
We first check the particles' distributing region at time zero. For $(r, w, l) \in S(0)$, we have 
\[
r > a_0 - \epsilon = a_0 - \frac{\sqrt{a^2+b^2}-\sqrt{2}a}{8\sqrt{2}}> c
\]
by our choice of $a_0$. Thus the particles are initially distributed in the given region. \\
Next, we check that the initial charge density and electric field are small enough. At time zero, we notice that by \ref{initial-density-upper-bound}, 
\[
\lVert \rho (0) \rVert_{\infty} \le \frac{1}{a_0} \le C_1.
\]
Moreover, using the conditions in \ref{electric-field-spherical-symmetry} and \ref{mass-upper-bound}, we can derive an upper bound for the initial electric field in each region. 
\begin{enumerate}
    \item For $|x| < a_0-\epsilon$, since the particles have initial spatial radius $r \ge a_0 - \epsilon$, this region is empty and we know that  $|E(0, x)| = 0$. 
    \item For $|x| > a_0+ \epsilon$, we know from \ref{electric-field-spherical-symmetry} that
\[
|E(0, x)| \le \frac{M}{r^2} \le \frac{8\pi a_0\epsilon + \frac{8\pi}{3}a_0^{-1}\epsilon^3}{(a_0+\epsilon)^2} \le 8\pi a_0^{-1}\epsilon + \frac{8\pi}{3}a_0^{-3}\epsilon^3.
\]
\item Lastly, for $a_0 -\epsilon \le |x| \le a_0 + \epsilon$, we have
\[
|E(0, x)| \le \frac{M}{r^2} \le \frac{8\pi a_0\epsilon + \frac{8\pi}{3}a_0^{-1}\epsilon^3}{(a_0-\epsilon)^2} \le 32\pi a_0^{-1}\epsilon + \frac{32\pi}{3}a_0^{-3}\epsilon^3.
\]
\end{enumerate}
Therefore, an uniform upper bound for the initial electric field is 
\[
\lVert E(0) \rVert_{\infty} \le 32\pi a_0^{-1}\epsilon + \frac{32\pi}{3}a_0^{-3}\epsilon^3,
\]
and we want to show that this value is bounded by $C_1$. Since this value is inversely related to $a_0$, we should be able to do so by choosing $a_0$ large enough. Specifically, we've chosen $a_0 \ge \text{max}\{ 3\sqrt{2}\pi(\sqrt{a^2+b^2}-\sqrt{2}a)C_1^{-1},  \, 2^{\frac{-11}{6}}\pi^{1\over 3}(\sqrt{a^2+b^2}-\sqrt{2}a)C_1^{-\frac{1}{3}} $. Then we can calculate an upper bound for the two terms separately that 
\[
32\pi a_0^{-1}\epsilon = 2\sqrt{2}\pi(\sqrt{a^2+b^2}-\sqrt{2}a)a_0^{-1} \le \frac{2}{3}C_1
\]
and 
\[
\frac{32\pi}{3}a_0^{-3}\epsilon^3 = \frac{\sqrt{2}\pi}{192}(\sqrt{a^2+b^2}-\sqrt{2}a)^3a_0^{-3}\le \frac{1}{3}C_1.
\] 
Thus by adding these results together, we can get
\[
\lVert E(0) \rVert_{\infty} \le 32\pi a_0^{-1}\epsilon + \frac{32\pi}{3}a_0^{-3}\epsilon^3 \le C_1. 
\]
Therefore, we've proven that our solution satisfies the conditions on the initial particle spatial radius, charge density, and electric field.\\
\\
We now look at time $T$, which is the presumptive time for the particles to concentrate. Lemma 3 informs us that particles with inward initial velocity will first move inwards until a time $T_0$ and then move outwards. To have a more accurate control over the particle location,we want the concentration to happen in a short time period during which the particles are still moving inwards. This is to say, we want $T< T_0$. To meet this requirement, Lemma 3 implies that we will need
\begin{equation}\label{condition-for{T<T_0}}
    T \le \frac{r}{|w|}-\frac{\sqrt{l+Mr}}{w^2} \quad \text{or }\quad wT\ge -r + \frac{\sqrt{l+Mr}}{|w|}.
\end{equation}
Now, in \ref{T-setting} we've asked $T \le \frac{a(|\sqrt{k}b-a_0-\epsilon|)}{\sqrt{l+(2\pi a_0 \epsilon + \frac{\pi}{6}a_0^{-1}\epsilon)(a_0-\epsilon)}}$. Since for $(r, w, l) \in S(0)$, we've let $\frac{a-a_0 + \epsilon}{T} \le w \le \frac{\sqrt{k}b - a_0 - \epsilon}{T}$, we can bound $|w|$ by 
\[
|w| \ge \left| \frac{\sqrt{k}b-a_0-\epsilon}{T}\right| \ge \frac{\sqrt{l+(2\pi a_0 \epsilon + \frac{\pi}{6}a_0^{-1}\epsilon)(a_0-\epsilon)}}{a} \ge \frac{\sqrt{l+Mr}}{a}.
\]
Therefore,
\[
\frac{\sqrt{l+Mr}}{|w|} \le a = (a-a_0+\epsilon) + (a_0 - \epsilon) \le wT +r,
\]
so that the inequality in \ref{condition-for{T<T_0}} is satisfied, and we've established $T \le T_0$ as we want.\\
\\
Now that the concentration is proven to happen in a short time period while the particles are moving inwards, we want to further verify that the particles are located within the targeting shield at time $T$. Specifically, we want that for all $(r, w, l) \in S(0)$, 
\[
a \le \mathscr{R}(T, 0, r, w, l) \le b.
\]
To do so, we can utilize the lower and upper bound for $\mathscr{R}(T)$ we've developed in Lemma 3. Specifically, we will need
\begin{equation} \label{R(T)>=a}
    \frac{l}{2}r^{-3}T^2 + wT + r \ge a
\end{equation}
and 
\begin{equation} \label{R(T)<=b}
     (r+wT)^2 + (lr^{-2} + Mr^{-1})T^2 \le b^2.
\end{equation}
Notice that the term $wT$ appears in both inequalities. A physical interpretation is that this term, the product of initial inward velocity and time of concentration, is a rough estimation of the distance the particles travelled inward in a short time period $T$. Thus controlling this term will help us satisfy both inequalities \ref{R(T)>=a} and \ref{R(T)<=b}. The range we choose for $w$  in \ref{initial-inward-velocity} tells us that
\[
a-a_0+\epsilon \le wT \le \sqrt{k}b - a_0 - \epsilon.
\]
Then immediately we have 
\[
\frac{l}{2}r^{-3}T^2 + wT + r \ge wT+r \ge (a-a_0+\epsilon) + (a_0 -\epsilon) =a,
\]
so that \ref{R(T)>=a} is satisfied. \\
Similarly, since $  wT \le \sqrt{k}b - a_0 - \epsilon $, we have $r+wT \le \sqrt{k}b$, so that
\[
(r+wT)^2 \le kb^2.
\]
Then in order to verify \ref{R(T)<=b}, we only need to show that the remaining term on the left side, $(r^{-2} + Mr^{-1})T^2$, is smaller than $b^2 - kb^2$. Since we have estimated $r$ and $M$ using constants $a_0$ and $\epsilon$, this inequality can be achieved simply by setting $T$ small enough. Specifically, in \ref{T-setting} we've set $T \le \sqrt{\frac{1-k}{l(a_0-\epsilon)^{-2}+(8\pi a_0\epsilon + \frac{8\pi}{3}a_0^{-1}\epsilon^3)(a_0-\epsilon)^{-1}}}b$. Plug this value into the term $(r^{-2} + Mr^{-1})T^2$ gives
\[
(r^{-2}+Mr^{-1})T^2 \le \left( l(a_0-\epsilon)^{-2}+(8\pi a_0\epsilon + \frac{8\pi}{3}a_0^{-1}\epsilon^3)(a_0-\epsilon)^{-1} \right) T^2 \le (1-k)b^2
\]
Therefore, combining these two result, we have 
\[
(r+wT)^2 + (lr^{-2} + Mr^{-1})T^2 \le kb^2 + (1-k)b^2 =b^2,
\]
so that \ref{R(T)<=b} is confirmed. As we've verified both inequalities \ref{R(T)>=a} and \ref{R(T)<=b}, we can use Lemma 3 to confirm that for all $(r, w, l) \in S(0)$, we have $a \le \mathscr{R}(T) \le b$.\\
\\
Finally, we're left to verify the charge density and electric field at time $T$ is large enough. Since we've established that the particles will be concentrated within $a \le \mathscr{R}(T) \le b$ at time $T$, we can use Lemma 5 to infer that both $\lVert \rho(T) \rVert_\infty$ and $\lVert E(T) \rVert_\infty$ are inversely related to the total particle mass. More specifically, Lemma 5 tells us that in order for $\lVert \rho(T) \rVert_\infty$, $\lVert E(T) \rVert_\infty \ge C_2$, we will need
\[
M \ge max\{\frac{4\pi}{3}(b^3-a^3)C_2, \, b^2C_2\}.
\]
Since we can estimate $M$ with 
\[
2\pi a_0 \epsilon + \frac{\pi}{6}a_0^{-1}\epsilon^3 \ge 2\pi a_0\epsilon,
\]
we know that $M$ is positively related with $a_0$. Therefore, we can satisfy both conditions by setting $a_0$ large enough. In \ref{a_0-large-enough}, we've set $a_0 \ge \frac{1 6\sqrt{2}(b^3-a^3)C_2}{3(\sqrt{a^2+b^2}-\sqrt{2}a)}$. Plug this value into our estimation of $M$ gives
\[
M \ge 2\pi a_0\epsilon \ge \frac{(\pi \sqrt{a^2+b^2}-\sqrt{2}a)}{4\sqrt{2}}a_0 \ge \frac{4\pi}{3}(b^3-a^3).
\]
Similarly, we've asked $a_0 \ge \frac{4\sqrt{2}b^2C_2}{\pi (\sqrt{a^2+b^2}-\sqrt{2}a)}$, which provides us with
\[
M \ge 2\pi a_0\epsilon \ge \frac{(\pi \sqrt{a^2+b^2}-\sqrt{2}a)}{4\sqrt{2}}a_0 \ge b^2C_2.
\]
Thus by Lemma 5, $\lVert \rho(T) \rVert_\infty \ge C_2$ and $\lVert E(T) \rVert_\infty \ge C_2$ is verified. \\
\\
Till now, we've confirmed that the solution of the VP system we choose is initially located at region $|x| \ge c$, and concentration within the targeting shield $a \le |x| \le b$ at some time $T$. Moreover, we've verified that the solution satisfies the conditions on charge density and electric field at time zero and time $T$, namely $\lVert \rho(0) \rVert_\infty \le C_1$, $\lVert E(0) \rVert_\infty \le C_1$; and $\lVert \rho(T) \rVert_\infty \ge C_2$, $\lVert E(T) \rVert_\infty \ge C_2$. Therefore, Theorem 6 is proven.
\end{proof}

\bigskip
\section*{4. Focusing Solution of the Relativistic Vlasov-Poisson System}
In this section, we want to prove a similar result for the relativistic Vlasov-Poisson System, namely
\begin{theorem}
    For any constant $C_1$, $C_2 > 0$ and $c > b > a > 0$, there exists a smooth, spherically symmetric solution of the relativistic Vlasov-Poisson system such that for $(r, w, l) \in S(0)$ we have 
    \[
    r \ge c,\quad  \text{and} \quad a \le \mathscr{R}(T, 0, r, w, l) \le b,
    \]
    For some $T >0$.\\
    Moreover, we have the estimation that
    \[
    \lVert \rho (0) \rVert_{\infty}, \quad  \lVert E (0) \rVert_{\infty} \le C_1,
    \]
    but
    \[
    \lVert \rho (T) \rVert_{\infty}, \quad  \lVert E (T) \rVert_{\infty} \ge C_2.
    \]
\end{theorem}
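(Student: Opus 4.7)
The plan is to mirror the argument of Section~3 and adapt each step to the relativistic kinematics. Place the particles initially in the same thin spherical shell of radius $\approx a_0$ (with $a_0$ chosen large), assign them inward radial momenta whose relativistic magnitudes are tuned so that at some time $T$ the support focuses into the target shell $[a,b]$, and then invoke Lemma~4 (the RVP analog of Lemma~3) for trajectory control, together with Lemma~5 for the final concentration lower bounds.

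For the parameter and initial data setup, I would retain $k$, $\epsilon$, and conditions (ii) verbatim, so that the initial bounds $r \ge c$, $\lVert\rho(0)\rVert_\infty \le 1/a_0 \le C_1$, the $L^\infty$ bound on $E(0)$, and the mass estimates \ref{mass-upper-bound}--\ref{mass-lower-bound} carry over without modification. There are two genuinely new choices. First, $T$ must be taken large enough that $T > a_0 - \sqrt{k}b + \epsilon$: this is unavoidable because the relativistic speed $|w|/\sqrt{1+w^2+lr^{-2}}$ is strictly bounded by $1$, so a particle cannot traverse more radius than $T$ in time $T$; simultaneously $T$ must be small enough that the residual mass-correction term $DT^2/\bigl(r^2(1+w^2+lr^{-2})\bigr)$ in Lemma~4(4) is at most $(1-k)b^2$. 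Second, I would restrict the initial momentum by
\[
\frac{a-r}{T} \;\le\; \frac{w}{\sqrt{1+w^2+lr^{-2}}} \;\le\; \frac{\sqrt{k}b - r}{T},
\]
so that the relativistic inward displacement $wT/\sqrt{1+w^2+lr^{-2}}$ plays exactly the role $wT$ played in \ref{initial-inward-velocity}.

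With these choices the proof runs in parallel to Section~3.2. The initial position, density, and electric field bounds are literally those of Theorem~6. The condition $T < T_0$ follows from the lower bound in Lemma~4(2): substituting the momentum range and \ref{mass-upper-bound} reduces it to an algebraic rearrangement of the same form as in the VP case. The inequality $\mathscr{R}(T) \ge a$ then follows directly from Lemma~4(5), since the first term on the right is nonnegative and the momentum range forces $r + wT/\sqrt{1+w^2+lr^{-2}} \ge a$. The inequality $\mathscr{R}(T) \le b$ is obtained from Lemma~4(4) by decomposing $\mathscr{R}(T)^2$ into a ballistic piece $(r + wT/\sqrt{1+w^2+lr^{-2}})^2 \le kb^2$ and a mass-correction piece $\le (1-k)b^2$ handled by the choice of $T$. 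Finally $\lVert\rho(T)\rVert_\infty, \lVert E(T)\rVert_\infty \ge C_2$ follows from Lemma~5 combined with the mass lower bound \ref{mass-lower-bound} and the threshold on $a_0$ from \ref{a_0-large-enough}, exactly as in the VP proof.

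The principal new obstacle is that the Lorentz factor $\gamma := \sqrt{1+w^2+lr^{-2}}$ couples $w$, $l$, and $r$, so the restriction on $w$ above implicitly depends on $w$ through $\gamma$. The cleanest remedy is to reparameterize, treating $\hat{w} := w/\gamma \in (-1,1)$ as the free variable and recovering $w$ by the smooth bijection $w = \hat{w}\sqrt{1+lr^{-2}}/\sqrt{1-\hat{w}^2}$. Choosing $\hat{w}$ in the interval $[(a-a_0+\epsilon)/T,\,(\sqrt{k}b-a_0-\epsilon)/T]$, which lies strictly inside $(-1,1)$ once $T > a_0 - \sqrt{k}b + \epsilon$, produces a uniform bound on $\gamma$ across $S(0)$ and decouples the constraints. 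Once this is done, the explicit analogs of \ref{T-setting} and \ref{a_0-large-enough} can be written down, and the remaining estimates reduce to routine variants of the computations already performed in Section~3.2.
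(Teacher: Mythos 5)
Your high-level plan is the right one — parameterize by the relativistic velocity $\hat w = w/\sqrt{1+w^2+lr^{-2}}$, use Lemma~4(4)--(5) in place of Lemma~3, and finish with Lemma~5 — but there is a genuine gap at the step $\mathscr{R}(T)\le b$, and it is precisely the step where the relativistic case departs most sharply from the VP case.

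The source of the difficulty is the mass-correction term in Lemma~4(4),
\[
\frac{D}{r^2(1+w^2+lr^{-2})}\,T^2
\;=\;\frac{l}{r^2\gamma^2}T^2 + \frac{M}{r\gamma}T^2, \qquad \gamma:=\sqrt{1+w^2+lr^{-2}}.
\]
In the VP proof this term can be crushed by taking $T$ tiny; there is no speed limit, so $w\approx (a-a_0)/T$ can be made huge while $T\to 0$. In the RVP case you correctly note that $|\hat w|<1$ forces $T\gtrsim a_0-\sqrt{k}b+\epsilon\sim a_0$. Since conditions (ii) force $M\sim a_0\epsilon$ and $r\sim a_0$ (and Lemma~5 requires $M$ large, so these scalings cannot be avoided), the second piece satisfies
\[
\frac{M}{r\gamma}T^2 \;\sim\; \frac{a_0\epsilon}{a_0\,\gamma}\,a_0^2 \;=\; \frac{\epsilon\,a_0^2}{\gamma},
\]
and you need this to be at most $(1-k)b^2$, i.e.\ $\gamma\gtrsim a_0^2$. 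Your reparameterization by $\hat w$ does not deliver this. Putting $\hat w$ in the interval $\bigl[(a-a_0+\epsilon)/T,\ (\sqrt{k}b-a_0-\epsilon)/T\bigr]$ with $T$ just above $a_0-\sqrt{k}b+\epsilon$ gives $1-\hat w^2\sim c/a_0$ with $c$ an $a_0$-independent constant (in fact one needs $T>a_0-a-\epsilon$, slightly stronger than what you state, even to keep $\hat w>-1$). Since $l$ is kept small, $\gamma=\sqrt{1+lr^{-2}}/\sqrt{1-\hat w^2}\sim\sqrt{a_0}$, so the correction term scales like $\epsilon\,a_0^{3/2}$ and diverges. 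There is therefore no window for $T$ that is both larger than $a_0-\sqrt{k}b+\epsilon$ and small enough to control the mass correction, once the other constraints forcing $a_0\to\infty$ (e.g.\ the bound $\lVert E(0)\rVert_\infty\le C_1$ and the mass lower bound for Lemma~5) are imposed.

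The paper repairs this by fixing the initial momentum $w$ itself (not $\hat w$) to be of size $\sim a_0^3$, confined to a narrow window $|w+a_0^3|< h a_0$ with $h$ small, and restricting $l\lesssim h a_0^3$ so that $lr^{-2}\ll w^2$ and hence $\gamma\sim a_0^3$. Then $\frac{M}{r\gamma}T^2\sim \epsilon/a_0\to 0$, and a painstaking estimate shows $D\lesssim a_0^{16/3}$ so that the correction is $\lesssim a_0^{-2/3}$, hence eventually $\le (1-k)b^2$. The auxiliary constant $h$ (determined by $N=\frac{a_0-\epsilon-a}{a_0+\epsilon-\sqrt{k}b}$) is then what makes the window for $T$ nonempty. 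So the essential idea your proposal is missing is that in the RVP case one must take the initial momenta \emph{ultra-relativistic}, with $|w|$ growing like a power of $a_0$ strictly greater than $a_0^2$; ``retain conditions (ii) verbatim and decouple by $\hat w$'' is not enough.
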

\subsection*{4.1. Parameters and Initial Data Settings}
We now begin constructing the focusing solution of the RVP system, which shares certain degrees of similarity with the solution of the VP system in the previous section, but is in fact more complicated in term of choice of parameters and settings of initial data.\\
\\
Given constants $C_1,\, C_2 >0$ and $c>b>a>0$, we introduce the following parameters and choose a smooth, spherically symmetric solution of the RVP system that satisfies the following conditions:\\
\begin{enumerate}[label=(\roman*)]
\item Define constant $k = (1+\frac{a^2}{b^2})\div2 = \frac{a^2+b^2}{2b^2}$, and $\epsilon = \frac{\sqrt{k}b-a}{4} = \frac{\sqrt{a^2+b^2}-\sqrt{2}a}{4\sqrt{2}}$. Also, choose constant $a_0 >0$ large enough such that
\begin{equation}\label{a_0-large-enough}
\begin{aligned}
    a_0 \ge \text{max} \{ &\frac{\sqrt{a^2+b^2}-\sqrt{2}a}{4\sqrt{2}}+c,\,
    C_1^{-1},\,
    6\sqrt{2}\pi(\sqrt{a^2+b^2}-\sqrt{2}a)C_1^{-1},  \,\\ 
    &2^{\frac{5}{6}}\pi^{\frac{1}{3}}(\sqrt{a^2+b^2}-\sqrt{2}a)C_1^{-\frac{1}{3}},\, 
    \epsilon^3, \,
    \left( \frac{\sqrt{86\pi}}{a}\right)^3, \,
    2\epsilon,\,
    \left[ \frac{344\pi}{(1-k)b^2}\right]^{\frac{3}{2}},\, \\
    &\frac{8\sqrt{2}(b^3-a^3)C_2}{3(\sqrt{a^2+b^2}-\sqrt{2}a)},\,
    \frac{2\sqrt{2}b^2C_2}{\pi (\sqrt{a^2+b^2}-\sqrt{2}a)}
    \}.        
\end{aligned}
\end{equation}

\item For $(r, w, l) \in S(0)$, we want
\begin{equation}\label{RVP-initial-radius}
   a_0 - \epsilon < r < a_0 + \epsilon.
\end{equation}
Moreover, the initial charge density $\rho_0$ satisfies
\begin{equation}\label{RVP-initial-density-upper-bound}
    \rho_0(r) \le \frac{1}{a_0} \quad \text{for all } r >0,
\end{equation}
and 
\begin{equation} \label{RVP-initial-density-lower-bound}
    \rho_0(r) = \frac{1}{2a_0} \quad \text{for } r \in [a_0 - \frac{1}{2}\epsilon, \, a_0 + \frac{1}{2}\epsilon].
\end{equation}

\item Denote 
\[
N = \frac{a_0 -\epsilon - a}{a_0 +\epsilon- \sqrt{k}b}.
\]
Our definition of $\epsilon$ and choice of $a_0$ informs us that $0 \le a_0 + \epsilon -\sqrt{k}b < a_0 - \epsilon - a$. Therefore $N > 1$.\\
We then pick constant $h>0$ such that 
\[
h \le \{\frac{N-1}{2(N+1)}a_0^2,\, \frac{\sqrt{N}-1}{1+\sqrt{N}}a_0^2 \}
\]
We then demand that for $(r, w, l)$, 
\begin{equation} \label{RVP-initial-velocity}
-a_0^3 - ha_0 < w < -a_0^3 + ha_0.
\end{equation}

\item For $(r, w, l) \in S(0)$, we want 
\begin{equation} \label{RVP-initial-angular-momentum}
0\le l < (ha_0-\epsilon)(a_0-\epsilon)^2
\end{equation}

\item Finally, we choose time $T>0$ such that 
\begin{equation} \label{RVP-time}
(a_0+ \epsilon-\sqrt{k}b) \frac{\sqrt{2+(a_0^3 + ha_0)}}{a_0^3 - ha_0} \le T \le (a_0-\epsilon-a) \frac{\sqrt{1+(a_0^3- ha_0)}}{a_0^3 +ha_0}.
\end{equation}

\end{enumerate}

\begin{remark}
    We first verify that the inequality in (v) is in fact valid. To do so, we will need to show that 
    \[
    (a_0+ \epsilon-\sqrt{k}b) \frac{\sqrt{2+(a_0^3 + ha_0)}}{a_0^3 - ha_0}  \le (a_0-\epsilon-a) \frac{\sqrt{1+(a_0^3- ha_0)}}{a_0^3 +ha_0},
    \]
    that is,
    \[
    \frac{a_0^3 +ha_0}{a_0^3-ha_0} \frac{\sqrt{2+(a_0^3+ha_0^3)^2}}{\sqrt{1+(a_0^3-ha_0)^2}} \le \frac{a_0-\epsilon-a}{a_0+\epsilon-\sqrt{k}b} = N.
    \]
    This is where our choice of $h$ in (iii) comes into play. We can see that since $h < \frac{\sqrt{N}-1}{1+\sqrt{N}}a_0^2$, we have $(1+\sqrt{N})ha_0 < (\sqrt{N}-1)a_0^3$. Thus $2ha_0 < (\sqrt{N}-1)(a_0^3-ha_0)$, so
    \[
    1+ \frac{ha_0}{a_0^3-ha_0}= \frac{a_0^3+ha_0}{a_0^3-ha_0} \le \sqrt{N}.
    \]
    Moreover, since $h < \frac{N-1}{2(N+1)}a_0^2$, we have $2ha_0^4(N+1) \le (N-1)a_0^6$. Therefore $0 \le (N-1)a_0^6 - 2ha_0^4(N+1)$. So
    \[
    0\le (N-1) + (N-1)a_0^6 - 2ha_0^4(N+1) + (N-1)h^2a_0.
    \]
    Therefore
    \[
    1+(a_0^3 + ha_0)^2 \le 2N + N (a_0^3-ha_0)^2,
    \]
    so
    \[
    \frac{2+(a_0^3+ha_0^3)^2}{1+(a_0^3-ha_0)^2} \le N.
    \]
    Thus 
    \[
    \frac{a_0^3 +ha_0}{a_0^3-ha_0} \frac{\sqrt{2+(a_0^3+ha_0^3)^2}}{\sqrt{1+(a_0^3-ha_0)^2}} \le \sqrt{N}\times \sqrt{N} = \frac{a_0-\epsilon-a}{a_0+\epsilon-\sqrt{k}b},
    \]
    and the claim we make in (v) about the choice of $T$ is valid. 
\end{remark}

\begin{remark}
    To see that these choices of parameters are feasible, we now, similar to the VP case, provide an example of a smooth function that satisfy all the above conditions.\\
    \\
    First of all, based on the given constants $C_1$, $C_2$, $a$, $b$, and $c$, we first fix constants $k$, $\epsilon$, and $a_0$ that satisfies the requirements in (i), and choose constant $T>0$ that satisfies (iii).\\
    Next, we choose $H: \, [0, \infty) \rightarrow [0, \infty)$ to be a smooth function that satisfies 
    \[
    \int_{\mathbf{R}^3} H(|u|^2) du = \frac{1}{2a_0}
    \]
    with $supp(H) \subset [0, 1]$. We then re-scale this function with a scalar $\delta > 0$ by defining
    \[
    H_\delta (|u|^2) = \frac{1}{\delta^3} H(\frac{|u|^2}{\delta^2}).
    \]
    Then we will find that 
    \[
    \int_{\mathbf{R}^3} H_\delta(|u|^2) du = \frac{1}{2a_0}
    \]
    and $supp(H_\delta) \subset [0, \delta^2]$. \\
    \\
    Next, we choose a smooth cut-off function $\chi \in C^\infty ((0, \infty), [0, 1])$ such that 
    \[  \begin{cases} 
      \chi(r) = 1 &\text{for   } \quad a_0 -\frac{1}{2}\epsilon \le |r| \le a_0 + \frac{1}{2}\epsilon \\
      \chi(r) = 0 & \text{for   } \quad|r| \le a_0 - \epsilon \text{ or } |r| \ge a_0 + \epsilon \\
   \end{cases}
    \]
    We then fix constants $d = a_0^3-a_0$, and $\delta = ha_0 - \epsilon >0$, and define function
    \[
    f_0(x, v) = H_\delta (|\left(1+\frac{d}{|x|} \right)x + v|^2) \chi(|x|).
    \] 
    Clearly $f_0$ is smooth. We then move on to claim that it satisfies (ii), (iii), and (iv)\\
    \\
     To verify this, we first notice that since $r = |x|$, we can obtain from the definition of $\chi$ that 
    \[
    a_0 - \epsilon \le r \le a_0 + \epsilon.
    \]
    Next, the support of $h_\delta$ indicates us that
    \[
    |\left(1+\frac{d}{|x|} \right)x+v|^2 \le \delta^2.
    \]
    Using change of variables and the relation $|v|^2 = w^2 + lr^{-2}$, we can obtain that 
    \[
    (r +d + w)^2 +lr^{-2} \le \delta^2.
    \]
    This inequality indicates two things. Firstly, we have $|r+d+w| \le \delta$, that is, 
    \[
    -a_0^3-ha_0 \le -(a_0^3-a_0)-(a_0+\epsilon)-(ha_0-\epsilon) \le-d-r-\delta \le w,
    \]
    and 
    \[
    w\ge -d-r+\delta \ge -(a_0^3-a_0)-(a_0-\epsilon)+ (ha_0-\epsilon) \ge -a_0^3+ha_0.
    \]
    Therefore the conditions in (iii) are satisfied.\\
    Secondly, it also informs us that $lr^{-2}\le \delta $. Therefore
    \[
    l \le (ha_0-\epsilon) (a_0-\epsilon)^2
    \]
    and (iv) is verified.\\
    \\Finally, to check the solution satisfies (ii), we notice that 
    \[
    \int_{\mathbf{R}^3} H_\delta (|\left(1+\frac{d}{|x|} \right)x + v|^2) dv = \int_{\mathbf{R}^3} H_\delta (|u|^2)du = \frac{1}{2a_0}.
    \]   
    and thus we can calculate that 
    \[
    \rho_0 (x) = \int_{\mathbf{R}^3} f_0 (x, v) = \left( \int_{\mathbf{R}^3} H_\delta (|\left(1+\frac{d}{|x|} \right)x + v|^2) dv \right) \chi(|x|)  = \frac{1}{2a_0} \chi(|x|),
    \]
    which satisfies (ii) as we want.\\
    Therefore, we've  verified $f_0(x,v)$ to be a smooth function that satisfies all the settings for the initial data. 
    \end{remark}
    
\begin{remark}
    Since our choices of initial spatial radius $r$ and initial charge density $\rho_0$ in (ii) are identical to those in the VP case, we can derive the same estimation on the total particle mass $M$ using $a_0$ and $\epsilon$. Specifically, we have
\begin{equation}\label{RVP-total-mass}
    2\pi a_0 \epsilon + \frac{\pi}{6} a_0^{-1} \epsilon^3 \le M \le 8\pi a_0 \epsilon + \frac{8\pi}{3}a_0^{-1} \epsilon^3.
\end{equation}
\end{remark}

\subsection*{4.2. Proof of Theorem 7}
\begin{proof}
The proof of Theorem 7 shares a similar structure with that of Theorem 6, i.e. the VP case. We start with examining the spatial radius, charge density, and electric field at time zero. Next we show that our choice of $T$ enable the concentration to happen while the particles are moving inwards. After that, we use the estimations of $\mathscr{R}(T)$ in Lemma 4 to show that the particles are concentrated within the targeting shield. Finally we use Lemma 5 to verify the conditions of charge density and electric field at time $T$. \\
\\
To begin with, we first verify that the particle distribution, charge density, and electric field at time zero satisfies the requirements in Theorem 7. For $(r, w, l) \in S(0)$, we know
    \[
    r > a_0 - \epsilon = a_0 - \frac{\sqrt{a^2+b^2}- \sqrt{2}a}{4\sqrt{2}} >c
    \]
    by our choice of $a_0$.\\
Moreover, since $a_0 \ge C_1^{-1}$, our setting of the initial charge density in \ref{RVP-initial-density-lower-bound} informs us that 
\[
\lVert \rho (0) \rVert_{\infty} \le \frac{1}{a_0} \le C_1.
\]
Next we check that the electric field is small enough at time zero. Since our choice of the initial spatial radius and initial charge density in \ref{RVP-initial-radius}, \ref{RVP-initial-density-upper-bound}, and \ref{RVP-initial-density-lower-bound} is exactly the same as that in the VP case \ref{initial-radius}, \ref{initial-density-upper-bound}, and \ref{initial-density-lower-bound}, we can simply apply an identical argument about the electric field in each region at time zero, namely that 
\[
\begin{cases}
    |E(0, x)| = 0 \quad \quad \text{for } |x| \le a_0 - \epsilon,\\
    |E(0, x)| \le 32 \pi a_0^{-1} \epsilon + \frac{32\pi }{3}a_0^{-3}\epsilon^3 \quad \quad \text{for } a_0 -\epsilon \le |x| \le a_0 + \epsilon,\\
    |E(0, x)| \le 8 \pi a_0^{-1} \epsilon + \frac{8\pi }{3}a_0^{-3}\epsilon^3 \quad \quad \text{for } |x| \ge a_0 + \epsilon.
\end{cases}
\]
Again, an uniform bound then becomes
\[
\lVert E(0) \rVert_{\infty} \le 32\pi a_0^{-1}\epsilon + \frac{32\pi}{3}a_0^{-3}\epsilon^3.
\]
Then by choosing $a_0$ large enough we ensures that this value is smaller than $C_1$. Since we choose $a_0 \ge \text{max}\{ 6\sqrt{2}\pi(\sqrt{a^2+b^2}-\sqrt{2}a)C_1^{-1},  \, 2^{\frac{5}{6}}\pi^{1\over 3}(\sqrt{a^2+b^2}-\sqrt{2}a)C_1^{-\frac{1}{3}} $ in \ref{a_0-large-enough} and set $ \epsilon = \frac{(\sqrt{a^2+b^2}-\sqrt{a})}{4\sqrt{2}}$ in (i), we can estimate the two terms in the right side separately that  
\[
32\pi a_0^{-1}\epsilon = 4\sqrt{2}\pi(\sqrt{a^2+b^2}-\sqrt{2}a)a_0^{-1} \le \frac{2}{3}C_1
\]
and 
\[
\frac{32\pi}{3}a_0^{-3}\epsilon^3 = \frac{\sqrt{2}\pi}{24}(\sqrt{a^2+b^2}-\sqrt{2}a)^3a_0^{-3}\le \frac{1}{3}C_1.
\] 
Therefore, we can deduce that
\[
\lVert E(0) \rVert_{\infty} \le 32\pi a_0^{-1}\epsilon + \frac{32\pi}{3}a_0^{-3}\epsilon^3 \le C_1. 
\]
Thus we’ve proven that our solution satisfies the conditions on the initial particle spatial radius, charge density, and electric field.\\
\\
We now look at the particle distribution at time $T$. Similar to the case of the VP system, we want to verify that the concentration happens during the period which the particles move inwards, namely $T < T_0$. Now Lemma 4 informs us that 
\[
T_0 \ge r \left( 1- \frac{D}{r^2w^2 + D}\right) \ge r - \frac{\sqrt{D}}{|w|} ,
\]
where 
\[
D = l + Mr \sqrt{1 + w^2 + lr^{-2}}.
\]
Therefore our first step is to derive a reasonable estimation for $D$. Our choice of initial data in \ref{RVP-initial-velocity} and \ref{RVP-initial-angular-momentum} tells us that 
\begin{equation}
    \begin{aligned}
        D & = l + Mr \sqrt{1+w^2 + lr^{-2}}\\
        & \le (ha_0-\epsilon)(a_0-\epsilon)^2 + (8\pi a_0 \epsilon + \frac{8\pi }{3}a_0^{-1}\epsilon^3) (a_0 + \epsilon) \sqrt{2 + (a_0^3+ha_0)^2}\\
        &\le (ha_0-\epsilon)(a_0-\epsilon)^2 + (8\pi a_0^2 \epsilon + \frac{8\pi}{3}\epsilon^3 + 8\pi a_0 \epsilon^2 + \frac{8\pi}{3}a_0^{-1}\epsilon^4) \sqrt{2 + (a_0^3+ha_0)^2}
    \end{aligned}
\end{equation}
by \ref{RVP-initial-radius} and \ref{RVP-total-mass}. Moreover, since we've set $a_0 \ge 1$ and $a_0\ge \epsilon^3$ (that is, $\epsilon \le a_0^{\frac{1}{3}}$), we can simplify the inequality that 
\begin{equation}
    \begin{aligned}
        D &\le (ha_0)^2 + (8\pi a_0^{\frac{7}{3}} + \frac{8\pi}{3}a_0 + 8\pi a_0^{\frac{5}{3}} + \frac{8\pi}{3}a_0^{\frac{1}{3}}) \sqrt{2+ (a_0^3+ha_0)^2}\\
        &\le (2a_0^2 )^2 + (8\pi a_0^{\frac{7}{3}} + \frac{8\pi}{3}a_0^{\frac{7}{3}} + 8\pi a_0^{\frac{7}{3}} + \frac{8\pi}{3}a_0^{\frac{7}{3}}) \sqrt{2+ (a_0^3+2a_0^2)^2}\\
        &\le 4a_0^4 + \frac{64\pi}{3} a_0^\frac{7}{3} \sqrt{2 + 9a_0^6}\\
        &\le 4a_0^4 + \frac{64\pi}{3} a_0^\frac{7}{3} \sqrt{16a_0^6}\\
        &\le 4a_0^2 + \frac{256\pi}{3}a_0^{\frac{16}{3}}\\
        &\le 86\pi a_0^{\frac{16}{3}}.
    \end{aligned}
\end{equation}
With this estimation in hand, we can now verify our choice of focusing time $T$. We first see that if we have $\frac{\sqrt{1+(a_0^3-ha_0)^2}}{a_0^3+ha_0} \le 1$, then \[
T \le (a_0-\epsilon-a) \frac{\sqrt{1+(a_0^3-ha_0)^2}}{a_0^3+ha_0} \le a_0-\epsilon-a.
\]
Then since we've set $a_0 \ge \left( \frac{\sqrt{86\pi}}{a}\right)^3$, that is, $\sqrt{86\pi}a_0^{-\frac{1}{3}} \le a$, we have
\[
\frac{\sqrt{D}}{|w|} \le \frac{\sqrt{86\pi a_0^{\frac{16}{3}}}}{a_0^3} \le  \frac{\sqrt{86\pi}a^{\frac{8}{3}}}{a_0^3} \le \sqrt{86\pi}a_0^{-\frac{1}{3}} \le a. 
\]
Therefore we can deduce that 
\[
T \le a_0 - \epsilon - a \le (a_0 - \epsilon) - \frac{\sqrt{D}}{|w|} \le r - \frac{\sqrt{D}}{|w|} \le T_0.
\]
To make this happen, we will need $1+(a_0^3-ha_0)^2 \le (a_0^3+ha_0)$, that is, $4ha_0^4 \ge 1$. Since we've set $a_0$ large enough, this condition can be satisfied. Therefore, $T \le T_0$ is verified, and the concentration in fact happens while the particles are moving inwards. \\
\\
We now turn to the location of particles at time $T$. For all $(r, w, l) \in S(0)$, we want $a \le \mathscr{R}(T, 0, r, w, l) \le b$, so that the particles are distributed in the targeting range. To do so, we will need the estimations on $\mathscr{R}(T)$ we derived in Lemma 4. Our choice of parameters in \ref{RVP-initial-radius}, \ref{RVP-initial-velocity}, and \ref{RVP-initial-angular-momentum} informs us that 
\[
\frac{|w|}{\sqrt{1+w^2+ lr^{-2}}}T \le \frac{a_0^3+ha_0}{\sqrt{1+(a_0^3-ha_0)^2}} \times (a_0-\epsilon-a) \frac{\sqrt{1+(a_0^3-ha_0)^2}}{a_0^3+ha_0} \le a_0 - \epsilon -a.
\]
Therefore by Lemma 4, 
\begin{equation}
    \begin{aligned}
        \mathscr{R}(T) & \ge \frac{l}{2r^3 (1 + w^2 + lr^{-2})} T^2 + \frac{w}{\sqrt{1+w^2 + lr^{-2}}}T + r\\
        &\ge  r - \frac{|w|}{\sqrt{1+w^2 + lr^{-2}}}T\\
        &\ge (a_0 - \epsilon) - (a_0 - \epsilon -a) \\
        &\ge a. 
    \end{aligned}
\end{equation}
Similarly, we know 
\[
\frac{|w|}{\sqrt{1+w^2 + lr^{-2}}}T \ge \frac{a_0^3-ha_0}{\sqrt{2+(a_0^3+ha_0)}} \times (a_0+ \epsilon-\sqrt{k}b) \frac{\sqrt{2+(a_0^3 + ha_0)}}{a_0^3 - ha_0} \ge a_0 + \epsilon - \sqrt{k}b.
\]
Therefore 
\begin{equation}
    \begin{aligned}
        \left(r - \frac{|w|}{\sqrt{1+ w^2 + lr^{-2}}} T \right)^2 & \le \left(a_0 + \epsilon - (a_0 + \epsilon - \sqrt{k}b) \right)^2\\
        & \le kb^2. 
    \end{aligned}
\end{equation}
Moreover, since we know $T \le a_0$, and we've set $a_0 \ge 2 \epsilon$ (so that $a_0 - \epsilon \ge \frac{1}{2}a_0)$, we can also calculate 
\begin{equation}
    \begin{aligned}
        \frac{D}{r^2 (1+w^2 + lr^{-2})}T^2 &\le \frac{D}{(a_0 -\epsilon)^2 (1 + (a_0^3+ha_0)^2)}a_0^2\\
        &\le \frac{D}{\frac{1}{4}a_0^2 \times a_0^6}a_0\\
        &\le \frac{86\pi a_0^{\frac{16}{3}}}{\frac{1}{4}a_0^8} a_0^2\\
        &\le 344\pi a_0^{-\frac{2}{3}}.
    \end{aligned}
\end{equation}
We've chosen $a_0$ large enough such that $a_0 \ge \left[ \frac{344\pi}{(1-k)b^2}\right]^{\frac{3}{2}}$, which is equivalent to say $344\pi a_0^{-\frac{2}{3}} \le (1-k)b^2$. Then we can deduce that 
\[
\frac{D}{r^2 (1+w^2+lr^{-2}) }T^2 \le (1-k) b^2.
\]
Finally using Lemma 4, we can reach the conclusion that since $T \in [0, T_0)$, 
\begin{equation}
    \begin{aligned}
        \mathscr{R}(T) & \le \left(r- \frac{|w|}{\sqrt{1+w^2+lr^{-2}}} T\right)^2 + \frac{D}{r^2 (1+w^2+lr^{-2})} T^2\\
        &\le kb^2 + (1-k)b^2\\
        &\le b^2,
    \end{aligned}
\end{equation}
so that $\mathscr{R}(T) \le b$. Therefore, for $(r, w, l) \in S(0)$, we've verified that 
\[
a \le \mathscr{R}(T, 0, r, w, l) \le b.
\]
\hfill\break
Finally, we're left to verify that the charge density and electric field are large enough at time $T$. Here again we apply an identical argument as in the VP case. We use Lemma 5 to deduce that the conditions $\lVert \rho(T) \rVert_\infty$, $\lVert E(T) \rVert_\infty \ge C_2$ are satisfied if we have 
\[
M \ge max\{\frac{4\pi}{3}(b^3-a^3)C_2,\, b^2 C_2\}.
\]
We know that $M \ge 2\pi a_0\epsilon + \frac{\pi}{6}a_0^{-1}\epsilon^3$. Then since we've set $a_0$ large enough, namely 
\[
a_0 \ge max\{\frac{8\sqrt{2}(b^3-a^3)C_2}{3(\sqrt{a^2+b^2}-\sqrt{2}a)}, \,  \frac{2\sqrt{2}b^2C_2}{\pi (\sqrt{a^2+b^2}-\sqrt{2}a)}\} 
\]
we can easily verify that 
\[
M \ge 2\pi a_0\epsilon \ge \frac{(\pi \sqrt{a^2+b^2}-\sqrt{2}a)}{2\sqrt{2}}a_0 \ge max \{\frac{4\pi}{3}(b^3-a^3), \, b^2C_2\}.
\]
Thus by Lemma 5, $\lVert \rho(T) \rVert_\infty \ge C_2$ and $\lVert E(T) \rVert_\infty \ge C_2$ is verified. Therefore, Theorem 7 is proven. 
\end{proof}
\newpage

\end{document}